\documentclass[12pt]{scrartcl}

\usepackage[]{amsmath, amssymb,amsfonts,amsthm,mathtools,braket,color,enumerate}

\usepackage[margin=20mm]{geometry}

\usepackage{hhline}
\usepackage{url}
\usepackage{here}
\usepackage{tikz-cd}
\usepackage{tikz}
\usetikzlibrary{decorations}
\usetikzlibrary{arrows}
\tikzstyle{v} = [circle, draw, inner sep=2pt, minimum size=3pt, fill=black]
\tikzstyle{l} = [rectangle, draw, rounded corners]

\usepackage{tabularx}
\newcolumntype{L}[1]{>{\raggedright\arraybackslash}p{#1}}
\newcolumntype{C}[1]{>{\centering\arraybackslash}p{#1}}
\newcolumntype{R}[1]{>{\raggedleft\arraybackslash}p{#1}}

\theoremstyle{plain}
\newtheorem{theorem}{Theorem}[section]
\newtheorem{lemma}[theorem]{Lemma}
\newtheorem{proposition}[theorem]{Proposition}
\newtheorem{corollary}[theorem]{Corollary}

\theoremstyle{definition}
\newtheorem{definition}[theorem]{Definition}

\newtheorem{example}[theorem]{Example}

\newtheorem{remark}[theorem]{Remark}

\newcommand{\FinSetE}{\mathbb{E}}
\newcommand{\FinSetB}{\mathbb{B}}
\newcommand{\FinLinSetL}{\mathbb{L}}

\newcommand{\SpLShi}[2]{L_{#1}\mathcal{S}^{#2}}
\newcommand{\SpLCat}[2]{L_{#1}\mathcal{C}^{#2}}
\newcommand{\SpLK}[2]{L_{#1}K^{#2}}

\newcommand{\SpSet}{\mathsf{E}}
\newcommand{\SpList}{\mathsf{L}}
\newcommand{\SpCycle}{\mathsf{C}}
\newcommand{\SpPart}{\Pi}
\newcommand{\SpF}{\mathsf{F}}
\newcommand{\SpG}{\mathsf{G}}

\DeclareMathOperator{\Map}{Map}
\DeclareMathOperator{\gap}{gap}
\DeclareMathOperator{\level}{level}

\newcommand{\ic}{\textcolor{white}{,}}


\title {Enumeration of Flats of the Extended Catalan and Shi Arrangements with Species}
\author{Norihiro Nakashima\thanks{Department of Mathematics, Nagoya Institute of Technology, Aichi, 466-8555, Japan. Email: nakashima@nitech.ac.jp} \and
Shuhei Tsujie
\thanks{Department of Mathematics, Hokkaido University of Education, Asahikawa, Hokkaido 070-8621, Japan. 
Email:tsujie.shuhei@a.hokkyodai.ac.jp}
}

\date{}

\begin{document}
\maketitle
	
\begin{abstract}
The number of flats of a hyperplane arrangement is considered as a generalization of the Bell number and the Stirling number of the second kind. 
Robert Gill gave the exponential generating function of the number of flats of the extended Catalan arrangements, using species. 
In this article, we introduce the species of flats of the extended Catalan and Shi arrangements and they are given by iterated substitution of species of sets and lists. 
Moreover, we enumerate the flats of these arrangements in terms of infinite matrices. 
\end{abstract}

{\footnotesize \textit{Keywords}: 
hyperplane arrangement, 
Shi arrangement, 
Catalan arrangement, 
intersection poset, 
species, 
gain graph, 
set partition, 
Bell number, 
Stirling number, 
Lah number
}

{\footnotesize \textit{2020 MSC}: 
05A18, 
05A19, 
05C22, 
52C35 
}

\tableofcontents

\section{Introduction}
A \textbf{hyperplane arrangement} is a finite collection of affine subspaces of codimension $ 1 $ in an affine space over an arbitrary field $ \mathbb{K} $. 
In spite of its simple definition, arrangements are investigated in a variety of ways, such as topological, algebrogeometric, and combinatorial aspects. 
A standard reference for hyperplane arrangements is the text written by Orlik and Terao \cite{orlik1992arrangements}. 

Given an arrangement $ \mathcal{A} $, let $ L(\mathcal{A}) $ denote the set of nonempty intersections of hyperplanes in $ \mathcal{A} $. 
Note that the ambient space is a member of $ L(\mathcal{A}) $ since it is regarded as the intersection over the empty set. 
We call an element of $ L(\mathcal{A}) $ a \textbf{flat}. 
Define a partial order on $ L(\mathcal{A}) $ by the reverse inclusion, that is, $ X \leq Y \Leftrightarrow X \supseteq Y $ for $X,Y\in L(\mathcal{A})$.  We call $ L(\mathcal{A}) $ the \textbf{intersection poset} of $ \mathcal{A} $.  
This poset plays an important role in the theory of hyperplane arrangements. 
For each nonnegative integer $ k $, let 
\begin{align*}
L_{k}(\mathcal{A}) \coloneqq \Set{ X \in L(\mathcal{A}) | \dim X = k }. 
\end{align*}
When $ \mathcal{A} $ is \textbf{central}, that is, the intersection of all hyperplanes in $ \mathcal{A} $ is nonempty, the poset $ L(\mathcal{A}) $ is a geometric lattice. 

A \textbf{set partition} of a finite set $ V $ is a collection $ \pi = \{B_{1}, \dots, B_{k}\} $ of nonempty subsets $ B_{i} \subseteq V $ such that $ B_{i} \cap B_{j} = \varnothing $ for $ i \neq j $ and $ \bigcup_{i=1}^{k}B_{i}=V $. 
Each $ B_{i} $ is called a \textbf{block} of $ \pi $. 
Let $ \pi $ and $ \pi^{\prime} $ be set partitions of $ V $.
Define a partial order $ \pi \leq \pi^{\prime} $ if
each block of $ \pi $ is a subset of some block of $ \pi^{\prime} $.
We also say that $ \pi $ \textbf{refines} $ \pi^{\prime} $ if $ \pi \leq \pi^{\prime} $. Then the collection of the set partitions of $ V $ forms a lattice.

For a positive integer $ n $, let $ [n] $ be the set $ \{1, \dots, n\} $
and $[0]$ the empty set.
The number of set partitions of $ [n] $ is called the \textbf{Bell number}, denoted by $ B(n) $, where $ B(0)=1 $. The number of set partitions of $ [n] $ into $ k $ blocks is called the \textbf{Stirling number of the second kind}, denoted by $ S(n, k) $, where $ S(0,0)=1 $. 

Let $ x_{1}, \dots, x_{n} $ denote coordinates of $ \mathbb{R}^{n} $ and $ \mathcal{B}_{n} $ the $ n $-dimensional \textbf{braid arrangement} (also known as the \textbf{Weyl arrangement of type $ A_{n-1} $}), which consists of hyperplanes $ \{x_{i}-x_{j}=0\} $ in $\mathbb{R}^n$ for $ 1 \leq i < j \leq n $. 
It is well known that there exists an isomorphism from $ L(\mathcal{B}_{n}) $ to the lattice of set partitions of $ [n] $ which sends a $ k $-dimensional flat to a set partition into $ k $ blocks. 
In other words, $ |L(\mathcal{B}_{n})| = B(n) $ and $ |L_{k}(\mathcal{B}_{n})| = S(n,k) $. 
The numbers of flats and $ k $-dimensional flats of an arrangement is considered to be generalizations of the Bell numbers and the Stirling numbers. 

Define the \textbf{extended Catalan arrangement} $ \mathcal{C}_{n}^{m} $ and the \textbf{extended Shi arrangement} $ \mathcal{S}_{n}^{m} $ in $ \mathbb{R}^{n} $ as follows. 
\begin{align*}
\mathcal{C}_{n}^{m} &\coloneqq \Set{\{x_{i}-x_{j}=a\} |1\leq i<j\leq n, -m \leq a \leq m}, \quad (m \geq 0), \\
\mathcal{S}_{n}^{m} &\coloneqq \Set{\{x_{i}-x_{j}=a\} |1\leq i<j\leq n, 1-m \leq a \leq m}, \quad (m \geq 1). 
\end{align*}

Note that $ \mathcal{C}_{n}^{0} = \mathcal{B}_{n} $ for every nonnegative integer $ n $ and $ L_{0}(\mathcal{C}^{m}_{n}) = L_{0}(\mathcal{S}^{m}_{n}) = \varnothing $ unless $ n = 0 $. 
Gill \cite{gill1998number-dm} investigated the intersection posets of the extended Catalan arrangements. 
First Gill determined the number of maximal elements of the poset $ L(\mathcal{C}^{m}_{n}) $ as follows. 
\begin{theorem}[Gill {\cite[Theorem 1]{gill1998number-dm}}]\label{Gill1}
Let $ m $ be a nonnegative integer. 
Then 
\begin{align*}
 \sum_{n=1}^{\infty}\left|L_{1}(\mathcal{C}_{n}^{m})\right|\frac{x^{n}}{n!} = \frac{e^{x}-1}{1-m(e^{x}-1)}. 
\end{align*}
\end{theorem}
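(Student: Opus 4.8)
The plan is to describe the one-dimensional flats of $\mathcal{C}^m_n$ explicitly as labelled configurations of points on the integer line, count them, and sum a geometric series. Since every hyperplane $\{x_i - x_j = a\}$ is invariant under translation by $(1, \dots, 1)$, every one-dimensional flat is a line
\begin{align*}
\ell_c \coloneqq \Set{(c_1 + t, \dots, c_n + t) | t \in \mathbb{R}}, \qquad c = (c_1, \dots, c_n) \in \mathbb{R}^n,
\end{align*}
with $\ell_c = \ell_{c'}$ exactly when $c - c'$ is a multiple of $(1, \dots, 1)$. The first step is the characterization: $\ell_c \in L_1(\mathcal{C}^m_n)$ if and only if (i) all differences $c_i - c_j$ are integers (so that, after translating, we may assume $c \in \mathbb{Z}^n$) and (ii) writing $\{c_1, \dots, c_n\} = \{p_1 < \dots < p_r\}$ for the set of occupied coordinates, $p_{s+1} - p_s \le m$ holds for every $1 \le s \le r - 1$. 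To prove it, recall that a flat equals the intersection of all hyperplanes containing it, and that the hyperplanes of $\mathcal{C}^m_n$ containing $\ell_c$ are precisely those of the form $\{x_i - x_j = c_i - c_j\}$ with $c_i - c_j \in \mathbb{Z}$ and $|c_i - c_j| \le m$. Hence $\ell_c$ is a one-dimensional flat if and only if the graph $G_c$ on $[n]$ having an edge $\{i,j\}$ whenever $c_i - c_j \in \mathbb{Z}$ and $|c_i - c_j| \le m$ is connected. Connectivity of $G_c$ forces (i) (sum integer edge-differences along a path), and under (i) connectivity is equivalent to (ii): labels sitting at a common occupied coordinate, and labels at consecutive occupied coordinates, are adjacent in $G_c$ once (ii) holds, whereas a gap $p_{s+1} - p_s > m$ disconnects the labels at coordinates $\le p_s$ from those at coordinates $\ge p_{s+1}$.

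The second step is bookkeeping. Normalizing the representative so that $p_1 = 0$, the characterization identifies a one-dimensional flat of $\mathcal{C}^m_n$ with a triple consisting of an integer $r \ge 1$, a gap sequence $(g_1, \dots, g_{r-1}) \in \{1, \dots, m\}^{r-1}$, and an ordered set partition $(B_1, \dots, B_r)$ of $[n]$ into $r$ nonempty blocks; conversely every such triple determines one, by placing the labels in $B_s$ at coordinate $g_1 + \dots + g_{s-1}$. Consequently
\begin{align*}
\left|L_1(\mathcal{C}^m_n)\right| = \sum_{r \ge 1} m^{r-1}\cdot\bigl(\text{number of ordered set partitions of } [n] \text{ into } r \text{ nonempty blocks}\bigr).
\end{align*}
The exponential generating function of ordered set partitions of a finite set into $r$ nonempty blocks is $(e^x - 1)^r$ (a list of $r$ nonempty sets). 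Multiplying by $x^n/n!$, summing over $n$, and interchanging the two sums, we obtain
\begin{align*}
\sum_{n=1}^{\infty}\left|L_1(\mathcal{C}_n^m)\right|\frac{x^n}{n!} = \sum_{r \ge 1} m^{r-1}(e^x - 1)^r = \frac{e^x - 1}{1 - m(e^x - 1)},
\end{align*}
the last equality being the geometric series, valid in the ring of formal power series since $e^x - 1$ has vanishing constant term. When $m = 0$ only the term $r = 1$ survives, correctly giving $\left|L_1(\mathcal{B}_n)\right| = 1$ for all $n \ge 1$.

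I expect the only real obstacle to be the first step, i.e. making the dictionary between one-dimensional flats and labelled integer configurations airtight: verifying both implications of the characterization, and—crucially—checking that passing to the quotient by translations corresponds exactly to the normalization $p_1 = 0$, so that the enumeration in the second step involves no overcounting. The remainder, including recognizing that $\sum_n r!\,S(n,r)\,x^n/n! = (e^x-1)^r$ and summing the geometric series, is routine. This argument also foreshadows the species treatment of the full intersection poset later in the paper: it exhibits the species of one-dimensional flats of the extended Catalan arrangements as obtained from the species of lists of nonempty sets by an $m$-weighted substitution.
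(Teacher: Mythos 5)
Your argument is correct. The dictionary you set up—after normalizing the minimum occupied coordinate to $0$, a one-dimensional flat of $\mathcal{C}^m_n$ is a set composition $(B_1,\dots,B_r)$ of $[n]$ together with a gap sequence in $\{1,\dots,m\}^{r-1}$—is exactly the combinatorial content of the paper's Lemma \ref{P^m} (via Lemma \ref{height function}): there a flat corresponds to a pair $(\sigma,\alpha)$ with $\sigma$ a set composition and $\alpha$ an integer composition with $\max(\alpha)\leq m$. The difference is in how each side gets there and what is done afterwards. The paper reaches this description through Zaslavsky's gain-graph machinery (Theorems \ref{Zaslavsky balanced semimatroid} and \ref{Zaslavsky connected partitions}, Corollary \ref{part-arr}), whereas you argue directly with the lines $\ell_c$ parallel to $(1,\dots,1)$ and the graph $G_c$ of hyperplanes containing $\ell_c$; this is more elementary and self-contained, at the cost of generality (the gain-graph setup is what lets the paper treat the Shi case and the full intersection poset uniformly). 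Downstream, the paper upgrades the description to a species isomorphism $\mathsf{P}^m=\SpList_{+}^{\circ m}\circ\SpSet_{+}$ (Lemma \ref{Pm=LmE}) and reads off the generating function from $\SpList_{+}^{\circ m}(x)=x/(1-mx)$, while you bypass the species identity and sum $\sum_{r\geq 1}m^{r-1}(e^x-1)^r$ directly; the two computations are of course the same geometric series, but the paper's route yields the stronger structural statement (Theorem \ref{main catalan}, including the $t$-refinement by dimension and the matrix formulas), of which Gill's Theorem \ref{Gill1} is a corollary. The details you flag as needing care—flats being intersections of all hyperplanes containing them, dimension of that intersection being the number of components of $G_c$, and translation classes having a unique representative with minimum $0$—all check out, so there is no gap.
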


In Gill's works \cite{gill1998number-dm}, the use of species, which were initiated by Joyal \cite{joyal1981theorie-aim}, is a noteworthy point. 
Standard references for species are the texts written by Bergeron, Labelle, and Leroux \cite{bergeron1998combinatorial,bergeron2013introduction}. 
Let $ \FinSetB $ denote the category of finite sets and bijections and $ \FinSetE $ the category of finite sets and maps. 
A \textbf{species, or $ \FinSetB $-species,} is a functor $ \SpF \colon \FinSetB \rightarrow \FinSetE $. 
The value of a species $ \SpF $ at a finite set $ V $ is denoted by $ \SpF[V] $. 
Moreover, we write simply $ \SpF[n] $ instead of $ \SpF[[n]] $. 
The symbol $ \SpF(x) $ is used for the exponential generating function 
\begin{align*}
\SpF(x) \coloneqq \sum_{n=0}^{\infty}\left|\SpF[n]\right|\frac{x^{n}}{n!}.
\end{align*}

\begin{example}
Let $ \SpSet $ denote the \textbf{species of sets}. 
Namely $ \SpSet[V] \coloneqq \{ V \} $. 
Then we have $ |\SpSet[n]|=1 $ for every $ n $ and thus $ \SpSet(x) = \sum_{n=0}^{\infty}\frac{x^{n}}{n!} = e^{x} $. 
\end{example}

\begin{example}
Let $ \SpList $ denote the \textbf{species of lists}. 
Namely 
\begin{align*}
\SpList[V] \coloneqq \Set{(v_{1}, \dots, v_{n}) | n = |V| \text{ and } V = \{v_{1}, \dots, v_{n}\}}. 
\end{align*}
We have $ \left|\SpList[n]\right|=n! $ for every $ n $ and hence $ \SpList(x) = \sum_{n=0}^{\infty}x^{n} = \frac{1}{1-x} $. 
\end{example}

Making use of species enables us to calculate generating functions systematically. 
Namely we can define operations for species such as sum, product, and so on.
These operations are compatible with the corresponding operations for the generating functions. 
Hence we may say that species is a refinement of the exponential generating function. 
\begin{definition}
For species $ \SpF $ and $ \SpG $, we define the \textbf{sum} $ \SpF+\SpG $ by $ (\SpF+\SpG)[V] \coloneqq \SpF[V] \sqcup \SpG[V] $, where $ \sqcup $ means the disjoint union. 
\end{definition}
Then we have $ (\SpF+\SpG)(x) = \SpF(x)+\SpG(x) $. 
\begin{definition}
For every species $ \SpF $ and a nonnegative integer $ k $, define the species $ \SpF_{k} $ by 
\begin{align*}
\SpF_{k}[V] \coloneqq \begin{cases}
\SpF[V], & \text{ if } |V|=k; \\
\varnothing, & \text{ otherwise. }
\end{cases}
\end{align*}
\end{definition}
Every species $ \SpF $ has a canonical decomposition $ \SpF = \sum_{k=0}^{\infty}\SpF_{k} $.
Furthermore, if we write $ \SpF_{+} \coloneqq \sum_{k=1}^{\infty}\SpF_{k} $, then $ \SpF_{+}[\varnothing] = \varnothing $ and $ \SpF_{+}(x) = \SpF(x)-\SpF(0) $.
\begin{example}\label{ex:sum}
\begin{align*}
\SpSet_{+}(x) = e^{x}-1, \quad \SpSet_{k}(x) = \frac{x^{k}}{k!}, \quad \text{ and } \quad \SpList_{+}(x) = \frac{1}{1-x}-1 = \frac{x}{1-x}. 
\end{align*}
\end{example}
In this article, we frequently use the operation $ \SpF \circ \SpG $ of species, called \textbf{substitution} (or \textbf{composition}).
\begin{definition}
Let $ \SpF $ and $ \SpG $ be species with $ \SpG[\varnothing] = \varnothing $. 
Define
\begin{align*}
(\SpF \circ \SpG)[V] \coloneqq \bigsqcup_{\pi \in \SpPart[V]} \left(\SpF[\pi] \times \prod_{B \in \pi}\SpG[B]\right), 
\end{align*}
where $ \SpPart $ denotes the \textbf{species of set partitions}. 
\end{definition}
Substitution of species corresponds to substitution of generating functions, that is, $ (\SpF\circ \SpG)(x) = \SpF(\SpG(x)) $. 
It is a functional tool for computing the generating function and explains why the exponential function $ e^{x} $ sometimes appears in the generating function. 
\begin{example}\label{ex:composition sets}
By definition, the exponential generating function of the Bell numbers is given by $ \SpPart(x) $. 
The species of set partitions $ \SpPart $ coincides with the species $ \SpSet \circ \SpSet_{+} $ (See Example \ref{ex:substitution} for details). 
Hence we have the following. 
\begin{align*}
\SpPart(x) = (\SpSet \circ \SpSet_{+})(x) = \exp\left( e^{x}-1 \right). 
\end{align*}
\end{example}
\begin{example}\label{ex:composition lists}
Given a species $\SpF$, let $ \SpF_{+}^{\circ m} $ be the $ m $-times iterated self-substitution of $ \SpF_{+} $. 
For the species $ \SpList $ of lists, we have 
\begin{align*}
\SpList_{+}^{\circ 2}(x) = (\SpList_{+} \circ \SpList_{+})(x) = \frac{\frac{x}{1-x}}{1-\frac{x}{1-x}} = \frac{x}{1-2x}. 
\end{align*}
More generally, one can show easily that 
\begin{align*}
\SpList_{+}^{\circ m}(x) = \frac{x}{1-mx}. 
\end{align*}
\end{example}

The construction of the extended Catalan arrangement $ \mathcal{C}^{m}_{n} $ is functorial in $ n $.
Namely, for every finite set $ V $, we may construct the corresponding Catalan arrangement in the vector space $ \mathbb{R}^{V} = \Map(V,\mathbb{R}) $ and hence there exist species $ \SpLCat{}{m} $ and $ \SpLCat{k}{m} $ such that $ \SpLCat{}{m}[n] = L(\mathcal{C}^{m}_{n}) $ and $ \SpLCat{k}{m}[n] = L_{k}(\mathcal{C}^{m}_{n}) $. 
Gill's second theorem is as follows. 
\begin{theorem}[Gill {\cite[Theorem 2]{gill1998number-dm}}]\label{Gill2}
For every $ m \geq 0 $, the equality $ \SpLCat{}{m} = \SpSet \circ \SpLCat{1}{m} $ holds. 
Moreover, the bivariate generating function of $ \left|L_{k}(\mathcal{C}_{n}^{m})\right| $ is given by
\begin{align*}
 \sum_{n=0}^{\infty}\sum_{k=0}^{n} \left|L_{k}(\mathcal{C}_{n}^{m})\right|t^{k}\frac{x^{n}}{n!} = \exp\left(t \frac{e^{x}-1}{1-m(e^{x}-1)}\right). 
\end{align*}
\end{theorem}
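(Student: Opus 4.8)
The plan is to first promote the block decomposition of a flat to a natural isomorphism of species $\SpLCat{}{m} = \SpSet \circ \SpLCat{1}{m}$, and then to read off the bivariate generating function from this isomorphism together with Theorem~\ref{Gill1}.

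To build the isomorphism, fix a finite set $V$ and a flat $X \in \SpLCat{}{m}[V] = L(\mathcal{C}_{V}^{m})$. I would call $i,j \in V$ \emph{tied on $X$} when the linear form $x_{i}-x_{j}$ is constant on $X$; since $X$ is an affine subspace this is an equivalence relation, and its classes form a set partition $\pi(X) \in \SpPart[V]$. Every equation $x_{i}-x_{j}=a$ defining $X$ involves two indices in a common block, so $X$ factors as a product $X = \prod_{B \in \pi(X)} X_{B}$ along $\mathbb{R}^{V} = \prod_{B \in \pi(X)}\mathbb{R}^{B}$, where $X_{B} \in L(\mathcal{C}_{B}^{m})$ is cut out by the equations supported on $B$; on a single block all pairwise differences of coordinates are constant, so fixing one coordinate pins down the rest and $X_{B}$ is $1$-dimensional, that is, $X_{B} \in \SpLCat{1}{m}[B]$. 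I would then check that
\[
X \longmapsto \bigl(\pi(X),\,(X_{B})_{B \in \pi(X)}\bigr)
\]
is a bijection from $L(\mathcal{C}_{V}^{m})$ onto $\bigsqcup_{\pi \in \SpPart[V]}\prod_{B \in \pi}\SpLCat{1}{m}[B] = (\SpSet \circ \SpLCat{1}{m})[V]$, natural in $V$; here the composition makes sense because $\SpLCat{1}{m}[\varnothing]=\varnothing$. The inverse sends $(\pi,(Y_{B})_{B})$ to the product $\prod_{B \in \pi}Y_{B} \subseteq \mathbb{R}^{V}$, which is an intersection of hyperplanes of $\mathcal{C}_{V}^{m}$ and hence a flat; since each $Y_{B}$ is a line, any $x_{i}-x_{j}$ with $i,j$ in different blocks is non-constant on the product, so the tied-index partition of this flat is exactly $\pi$, and the two maps are mutually inverse. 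This yields $\SpLCat{}{m} = \SpSet \circ \SpLCat{1}{m}$.

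Next I would track dimension. If $X$ corresponds to $(\pi,(Y_{B})_{B})$, then $\dim X = \sum_{B \in \pi}\dim Y_{B} = |\pi|$, so $X$ is $k$-dimensional exactly when $\pi$ has $k$ blocks; restricting the isomorphism to partitions with $k$ blocks therefore gives $\SpLCat{k}{m} = \SpSet_{k} \circ \SpLCat{1}{m}$. Passing to generating functions via the compatibility $(\SpF \circ \SpG)(x) = \SpF(\SpG(x))$ and $\SpSet_{k}(x) = x^{k}/k!$ from Example~\ref{ex:sum}, we get $\SpLCat{k}{m}(x) = \SpLCat{1}{m}(x)^{k}/k!$, and hence
\[
\sum_{n=0}^{\infty}\sum_{k=0}^{n}\left|L_{k}(\mathcal{C}_{n}^{m})\right|t^{k}\frac{x^{n}}{n!} = \sum_{k=0}^{\infty}t^{k}\frac{\SpLCat{1}{m}(x)^{k}}{k!} = \exp\!\left(t\,\SpLCat{1}{m}(x)\right).
\]
Substituting $\SpLCat{1}{m}(x) = \dfrac{e^{x}-1}{1-m(e^{x}-1)}$ from Theorem~\ref{Gill1} gives the claimed formula.

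The main obstacle I expect is the first part, namely turning the block decomposition into an honest \emph{natural} isomorphism of species. The two points that need care are that the restriction $X_{B}$ of a flat really is a flat of the smaller Catalan arrangement $\mathcal{C}_{B}^{m}$ (so the target is literally $\SpLCat{1}{m}$ and not an auxiliary construction), and that reassembling a line on each block does not inadvertently tie together indices from different blocks — this last point is what forces the reconstructed partition to equal $\pi$ and makes the correspondence a bijection. Once the isomorphism of species is in place, the generating-function computation is purely formal.
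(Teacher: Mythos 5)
Your argument is correct, but it reaches the statement by a different route than the paper does. You prove $\SpLCat{}{m} = \SpSet \circ \SpLCat{1}{m}$ directly on the geometric side: the ``tied'' relation ($x_{i}-x_{j}$ constant on $X$) partitions $V$, the flat factors as a product of its block projections, each block projection is a translate of the diagonal line and hence a $1$-dimensional flat of the smaller Catalan arrangement, and reassembly gives the inverse; the bivariate generating function then follows formally from $\SpLCat{k}{m} = \SpSet_{k}\circ\SpLCat{1}{m}$ together with Theorem~\ref{Gill1} as a black box for $\SpLCat{1}{m}(x)$. The paper never argues this way: it treats Theorem~\ref{Gill2} as Gill's result and recovers it as a corollary of Theorem~\ref{main catalan}, whose proof runs through Zaslavsky's gain-graph machinery --- flats of the affinographic arrangement are identified with connected partitions of $K^{[-m,m]}_{n}$ (Theorems~\ref{Zaslavsky balanced semimatroid} and~\ref{Zaslavsky connected partitions}, Corollary~\ref{part-arr}), the block decomposition is then Proposition~\ref{part decomp}, and the inner species is computed outright as $\SpLCat{1}{m} = \SpList_{+}^{\circ m}\circ\SpSet_{+}$ via height functions, so Gill's generating function is re-derived rather than quoted. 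Your approach is more elementary and self-contained for this particular statement (no semimatroids, no height functions), at the cost of leaning on Theorem~\ref{Gill1} for the inner series; the paper's approach buys uniformity --- it works for an arbitrary gain set $A$, hands you the finer structural identity $\SpLCat{k}{m} = \SpSet_{k}\circ\SpList_{+}^{\circ m}\circ\SpSet_{+}$, and transfers to the Shi case where the linear order matters. The only points in your write-up that deserve an explicit sentence are that a flat equals the intersection of \emph{all} arrangement hyperplanes containing it (which justifies the product decomposition independently of the chosen defining equations), and that each block projection has dimension exactly one because it is invariant under translation by the all-ones vector; both are immediate, so the proof stands.
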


We write $ \SpF = \SpG $ if there exists a natural isomorphism between species $ \SpF $ and $ \SpG $. 
In this paper, we will improve Gill's results in terms of species as follows. 
\begin{theorem}\label{main catalan}
Let $ m $ and $ k $ be nonnegative integers. 
Then 
\begin{align*}
\SpLCat{k}{m} = \SpSet_{k} \circ \SpList_{+}^{\circ m} \circ \SpSet_{+} \quad 
\text{ and } \quad \SpLCat{}{m} = \SpSet\circ \SpList_{+}^{\circ m} \circ \SpSet_{+}. 
\end{align*}
\end{theorem}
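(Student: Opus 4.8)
The plan is to give an explicit combinatorial model for $L(\mathcal{C}_{n}^{m})$ and then to recognize that model, block by block, as the species $\SpList_{+}^{\circ m}\circ\SpSet_{+}$. Given a flat $X$, declare $i\sim j$ when $x_{i}-x_{j}$ is constant on $X$; this is an equivalence relation, its classes form a set partition $\pi$ of $[n]$, and on each block $B$ the flat determines a function $h_{B}\colon B\to\mathbb{Z}$, unique up to an additive constant, with $x_{i}-x_{j}=h_{B}(i)-h_{B}(j)$ on $X$ for all $i,j\in B$ (integrality of $h_{B}$ holds because $X$ is cut out by equations $x_{p}-x_{q}=a$ with $a\in\mathbb{Z}$). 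Conversely, any partition $\pi$ with a choice of $h_{B}$ modulo translation for each block determines the affine subspace $X=\{x : x_{i}-x_{j}=h_{B}(i)-h_{B}(j)\text{ whenever }i,j\text{ lie in a common block }B\}$, and $X\mapsto(\pi,(h_{B})_{B})$ is a bijection onto such pairs. The technical heart is to decide which pairs come from genuine flats: the intersection of all hyperplanes of $\mathcal{C}_{n}^{m}$ containing $X$ has induced partition equal to the connected components of the disjoint union of the graphs $G_{B}$, where $G_{B}$ is the graph on $B$ with edge set $\{\{i,j\} : |h_{B}(i)-h_{B}(j)|\le m\}$; since $X$ lies in $L(\mathcal{C}_{n}^{m})$ if and only if it equals that intersection, we get $X\in L(\mathcal{C}_{n}^{m})$ iff every $G_{B}$ is connected. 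A dimension count moreover gives $\dim X=|\pi|$, so it suffices to prove $\SpLCat{k}{m}=\SpSet_{k}\circ\SpList_{+}^{\circ m}\circ\SpSet_{+}$ and sum over $k$, using that substitution distributes over sums in the first argument.

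I would then analyze a single block $B$. Let $\sigma$ be the partition of $B$ into the fibers (level sets) of $h_{B}$; each fiber is a clique of $G_{B}$ and two fibers at heights $u<v$ are joined in $G_{B}$ iff $v-u\le m$, so $G_{B}$ is connected iff the distinct heights, listed increasingly, have all consecutive gaps in $\{1,\dots,m\}$. Normalizing the least height to $0$, the data ``$h_{B}$ modulo translation with $G_{B}$ connected'' becomes equivalent to ``$\sigma$, the linear order of its blocks by height, and a gap vector in $[m]^{|\sigma|-1}$''. Next I would check, by an easy induction on $m$ using $\SpList_{+}^{\circ m}=\SpList_{+}\circ\SpList_{+}^{\circ(m-1)}$, that a $\SpList_{+}^{\circ m}$-structure on a finite set is precisely a linear order on that set together with a colouring of its consecutive gaps by $[m]$ (the colour recording the depth in the iterated substitution at which the gap becomes a block boundary). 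Since $\SpSet_{+}[Q]$ is a one-element set for every nonempty $Q$, combining this with the definition of substitution identifies $(\SpList_{+}^{\circ m}\circ\SpSet_{+})[B]$ with the set of $\mathbb{Z}$-valued functions on $B$ modulo translation whose threshold graph $G_{B}$ is connected, the identification being exactly the fiber decomposition described above.

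Assembling the pieces, a flat of $\mathcal{C}_{n}^{m}$ whose block partition has $k$ parts is the same as a partition of $[n]$ into $k$ blocks, each carrying a $(\SpList_{+}^{\circ m}\circ\SpSet_{+})$-structure, which by the definition of substitution is precisely an element of $(\SpSet_{k}\circ(\SpList_{+}^{\circ m}\circ\SpSet_{+}))[n]$; associativity of substitution rewrites this as $\SpSet_{k}\circ\SpList_{+}^{\circ m}\circ\SpSet_{+}$, and summing over $k$ yields the statement for $\SpLCat{}{m}$. Every bijection used — the passage $X\leftrightarrow(\pi,(h_{B})_{B})$, the fiber decomposition, and the unwinding of $\SpList_{+}^{\circ m}$ — is defined without reference to the particular ground set, hence commutes with relabelings and produces natural isomorphisms of species. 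As consistency checks, $m=0$ gives $\SpLCat{}{0}=\SpSet\circ\SpSet_{+}=\SpPart$, matching $|L(\mathcal{B}_{n})|=B(n)$, and taking generating functions recovers Gill's Theorems \ref{Gill1} and \ref{Gill2} because $(\SpList_{+}^{\circ m}\circ\SpSet_{+})(x)=\frac{e^{x}-1}{1-m(e^{x}-1)}$. The main obstacle is the first step — proving that a consistent system of integer differences on the blocks cuts out an honest flat of $\mathcal{C}_{n}^{m}$ exactly when each threshold graph $G_{B}$ is connected — while the reduction of $\SpList_{+}^{\circ m}$ to colour-graded linear orders and the remaining bookkeeping are routine.
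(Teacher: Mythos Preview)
Your proposal is correct and follows essentially the same route as the paper: identify a flat with a set partition whose blocks carry height functions modulo translation, observe that a height function gives a connected block exactly when the consecutive gaps between occupied heights are all at most $m$, and then prove by induction on $m$ that such data is precisely a $(\SpList_{+}^{\circ m}\circ\SpSet_{+})$-structure (your ``colour-graded linear order'' description of $\SpList_{+}^{\circ m}$ is exactly the paper's Lemma~\ref{Pm=LmE}, where the colour of a gap is the level at which it becomes a block boundary in the iterated substitution). The one substantive difference is packaging: the paper obtains the flat--to--connected-partition bijection by invoking Zaslavsky's gain-graph machinery (Theorems~\ref{Zaslavsky balanced semimatroid} and \ref{Zaslavsky connected partitions}, Corollary~\ref{part-arr}, Proposition~\ref{part decomp}), whereas you give a direct elementary argument via the threshold graphs $G_{B}$. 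Your version is more self-contained for this single theorem; the paper's version is more modular and reusable, which pays off when the same framework is applied to the Shi case in the next subsection.
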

This theorem together with Example \ref{ex:sum}, Example \ref{ex:composition sets}, and Example \ref{ex:composition lists} leads to the following corollary, which is equivalent to the generating functions in Theorem \ref{Gill1} and Theorem \ref{Gill2}. 
\begin{corollary}
\begin{align*}
\SpLCat{k}{m}(x) &= \sum_{n=0}^{\infty} \left| L_{k}(\mathcal{C}^{m}_{n}) \right|\frac{x^{n}}{n!} 
= \frac{1}{k!}\left(\frac{e^{x}-1}{1-m(e^{x}-1)}\right)^{k}, \\
\SpLCat{}{m}(x) &= \sum_{n=0}^{\infty} \left| L(\mathcal{C}^{m}_{n}) \right|\frac{x^{n}}{n!} 
= \exp\left(\frac{e^{x}-1}{1-m(e^{x}-1)}\right). 
\end{align*}
\end{corollary}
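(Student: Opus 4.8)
The corollary is a purely formal consequence of Theorem \ref{main catalan}: the plan is to pass to exponential generating functions in both identities of that theorem and invoke the compatibility of substitution of species with composition of power series, $(\SpF\circ\SpG)(x)=\SpF(\SpG(x))$, valid whenever the inner species sends $\varnothing$ to $\varnothing$. First I would check that all substitutions occurring in Theorem \ref{main catalan} meet this domain condition: $\SpSet_{+}[\varnothing]=\varnothing$, and since for any composite one has $(\SpF\circ\SpG)[\varnothing]\cong\SpF[\varnothing]$, iterating with $\SpList_{+}^{\circ m}$ (whose value at $\varnothing$ is empty because each $\SpList_{+}[\varnothing]=\varnothing$) keeps the value at $\varnothing$ empty, so $\SpList_{+}^{\circ m}\circ\SpSet_{+}$ is an admissible inner argument for $\SpSet_{k}$ and for $\SpSet$.

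Next I would assemble the generating function of the composite from the building blocks recorded in Example \ref{ex:sum} and Example \ref{ex:composition lists}, namely $\SpSet_{+}(x)=e^{x}-1$, $\SpList_{+}^{\circ m}(x)=\frac{x}{1-mx}$, $\SpSet_{k}(x)=\frac{x^{k}}{k!}$, and $\SpSet(x)=e^{x}$. Substituting $\SpSet_{+}(x)=e^{x}-1$ into $\SpList_{+}^{\circ m}(x)$ gives
\[
\bigl(\SpList_{+}^{\circ m}\circ\SpSet_{+}\bigr)(x)=\frac{e^{x}-1}{1-m(e^{x}-1)},
\]
and then applying $\SpSet_{k}$ from the outside yields $\SpLCat{k}{m}(x)=\frac{1}{k!}\left(\frac{e^{x}-1}{1-m(e^{x}-1)}\right)^{k}$, while applying $\SpSet$ yields $\SpLCat{}{m}(x)=\exp\left(\frac{e^{x}-1}{1-m(e^{x}-1)}\right)$. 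Since by construction $\SpLCat{k}{m}[n]=L_{k}(\mathcal{C}^{m}_{n})$ and $\SpLCat{}{m}[n]=L(\mathcal{C}^{m}_{n})$, these generating functions are exactly the series $\sum_{n}\left|L_{k}(\mathcal{C}^{m}_{n})\right|\frac{x^{n}}{n!}$ and $\sum_{n}\left|L(\mathcal{C}^{m}_{n})\right|\frac{x^{n}}{n!}$ in the statement.

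I do not expect a genuine obstacle here; the mathematical content is entirely in Theorem \ref{main catalan}, and the corollary only extracts generating functions, the sole care being the bookkeeping of the iterated substitutions and the $\SpG[\varnothing]=\varnothing$ condition above. For completeness I would then indicate how Gill's results are recovered: Theorem \ref{Gill1} is the $k=1$ instance of the first formula (the $n=0$ term contributes nothing, since the ambient space of $\mathcal{C}^{m}_{0}$ is $0$-dimensional and hence $L_{1}(\mathcal{C}^{m}_{0})=\varnothing$), and Theorem \ref{Gill2} follows by multiplying the first formula by $t^{k}$ and summing over $k\geq 0$:
\[
\sum_{k=0}^{\infty}t^{k}\,\SpLCat{k}{m}(x)=\sum_{k=0}^{\infty}\frac{1}{k!}\left(t\,\frac{e^{x}-1}{1-m(e^{x}-1)}\right)^{k}=\exp\left(t\,\frac{e^{x}-1}{1-m(e^{x}-1)}\right).
\]
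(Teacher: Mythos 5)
Your proposal is correct and follows essentially the same route as the paper, which deduces the corollary directly from Theorem \ref{main catalan} together with the generating-function facts $\SpSet_{+}(x)=e^{x}-1$, $\SpSet_{k}(x)=\frac{x^{k}}{k!}$, $\SpList_{+}^{\circ m}(x)=\frac{x}{1-mx}$ and the compatibility of substitution of species with composition of exponential generating functions. Your additional checks (the $\SpG[\varnothing]=\varnothing$ condition and the recovery of Gill's theorems) are consistent with the paper's remarks and introduce nothing divergent.
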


We will also give an analog for the extended Shi arrangement $ \mathcal{S}^{m}_{n} $. 
However, the construction of $ \mathcal{S}^{m}_{n} $ cannot be regarded as a functor on $ \FinSetB $ since the construction requires the linear order on the set $ [n] $. 
For this reason we consider $ \FinLinSetL $-species, that is, a functor $ \SpF \colon \FinLinSetL \rightarrow \FinSetE $, where $ \FinLinSetL $ denotes the category of linearly ordered finite sets and order-preserving bijections. 
Note that every $ \FinSetB $-species can be considered as an $ \FinLinSetL $-species via the forgetful functor from $ \FinLinSetL $ to $ \FinSetB $. 

Let $ \SpLShi{}{m} $ and $\SpLShi{k}{m} $ denote the $ \FinLinSetL $-species such that $ \SpLShi{}{m}[n] = L(\mathcal{S}^{m}_{n}) $ and $ \SpLShi{k}{m}[n] = L_{k}(\mathcal{S}^{m}_{n}) $.
The following is another main result of this article. 
\begin{theorem}\label{main shi}
Let $ m $ and $ k $ be nonnegative integers. 
Then 
\begin{align*}
\SpLShi{k}{m} = \SpSet_{k} \circ \SpList_{+}^{\circ m} \quad 
\text{ and } \quad \SpLShi{}{m} = \SpSet\circ \SpList_{+}^{\circ m}. 
\end{align*}
\end{theorem}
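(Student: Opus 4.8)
The plan is to follow the proof of Theorem~\ref{main catalan}, adapting the combinatorial model of flats to the order-dependent setting forced by the Shi arrangement. A flat $X\in L(\mathcal{S}_{n}^{m})$ is the zero set of exactly those linear forms $x_{i}-x_{j}-a$ (with $i<j$ and $1-m\le a\le m$) that vanish on it; hence $X$ is equivalent to the pair consisting of the partition $\pi(X)$ of $[n]$ into the connected components of the graph $\Gamma_{X}$ with an edge $\{i,j\}$ ($i<j$) whenever $x_{i}-x_{j}$ is constant on $X$ with value in $[1-m,m]$, together with, for each block $B$, the \emph{profile} $h_{B}\colon B\to\mathbb{Z}$, $i\mapsto x_{i}|_{X}$, taken modulo a global additive shift. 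Conversely a pair $(\pi,(h_{B})_{B\in\pi})$ comes from a flat precisely when each block $B$ is \emph{$m$-admissible}, i.e.\ the graph on $B$ with an edge $\{i,j\}$ ($i<j$) whenever $h_{B}(i)-h_{B}(j)\in[1-m,m]$ is connected. As the profiles on distinct blocks are independent and $\dim X$ equals the number of blocks of $\pi(X)$, this identifies $\SpLShi{}{m}$ and $\SpLShi{k}{m}$ with species of decorated partitions, and one reads off $\SpLShi{}{m}=\SpSet\circ\mathcal{K}^{m}$ and $\SpLShi{k}{m}=\SpSet_{k}\circ\mathcal{K}^{m}$, where $\mathcal{K}^{m}$ is the $\FinLinSetL$-species sending a nonempty linearly ordered set to its set of $m$-admissible profiles modulo shift (and $\mathcal{K}^{m}[\varnothing]=\varnothing$). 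It thus suffices to prove $\mathcal{K}^{m}=\SpList_{+}^{\circ m}$ --- the Shi counterpart of the Catalan identity $\mathcal{K}^{m}_{\mathcal{C}}=\SpList_{+}^{\circ m}\circ\SpSet_{+}$, the inner $\SpSet_{+}$ now being absent.

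For the identity $\mathcal{K}^{m}=\SpList_{+}^{\circ m}$ I would first record the normal form that, for a nonempty set $V$, an element of $\SpList_{+}^{\circ m}[V]$ is the same as a pair $(\ell,c)$ with $\ell$ a linear order on $V$ and $c$ a labelling of the $|V|-1$ adjacent pairs of $\ell$ by $\{1,\dots,m\}$; this is a one-line induction on $m$ from $\SpList_{+}^{\circ m}=\SpList_{+}\circ\SpList_{+}^{\circ(m-1)}$. Then, to an $m$-admissible profile $h$ on $B$ I would associate the pair $(\ell(h),c)$, where $\ell(h)$ lists $B$ in decreasing order of $h$-value with ties broken by the \emph{reverse} of the ambient order of $B$, and $c$ assigns to an adjacent pair $(b,b')$ of $\ell(h)$ the label $h(b)-h(b')$ when $b<b'$ and $h(b)-h(b')+1$ when $b>b'$. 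The claim is that $h\mapsto(\ell(h),c)$ is a natural bijection $\mathcal{K}^{m}[B]\xrightarrow{\sim}\{(\ell,c)\}$: it is visibly equivariant for order-preserving bijections of $B$; it is injective because $h$ is reconstructed from $(\ell,c)$ by accumulating labels along $\ell$, decrementing by one at each descent; and surjectivity amounts to checking that this reconstruction always yields an $m$-admissible profile whose induced order is $\ell$. Alternatively one can induct on $m$, establishing $\mathcal{K}^{m}=\SpList_{+}\circ\mathcal{K}^{m-1}$ by peeling off one layer of hyperplanes; the combinatorial content is the same.

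The crux --- and the step I expect to be the main obstacle --- is the exact characterisation of $m$-admissibility: a profile $h$ on $B$ is $m$-admissible if and only if, in the order $\ell(h)$, every adjacent pair is already an edge of the admissibility graph, equivalently every adjacent ascent has $h$-gap in $\{1,\dots,m\}$ and every adjacent descent has $h$-gap in $\{0,\dots,m-1\}$. The nontrivial direction is that connectedness of the graph on $B$ forces this: given an adjacent pair of $\ell(h)$ with too large an $h$-gap, one splits $B$ into the ``upper'' part and the ``lower'' part determined by that gap and shows that no edge can cross --- here one uses simultaneously the height order, the index order, and the asymmetry of the Shi interval $[1-m,m]$, together with the fact that the tie-breaking puts the smallest index of a height level last, so that the would-be endpoints of a crossing edge lie on the wrong side of $<$. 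Once this lemma is in place, the two displayed equalities in Theorem~\ref{main shi} follow at once, and this is precisely where the Shi proof diverges from that of Theorem~\ref{main catalan}: for the symmetric interval $[-m,m]$ admissibility reduces to ``consecutive distinct heights differ by at most $m$'' and nothing orders the elements sharing a common height, which is why the Catalan per-block species carries the extra innermost $\SpSet_{+}$.
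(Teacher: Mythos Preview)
Your proposal is correct and tracks the paper's proof closely. The reduction to the one-block species via the partitional decomposition is the content of Proposition~\ref{part decomp} and Corollary~\ref{part-arr}; your characterisation of $m$-admissibility is exactly Lemma~\ref{Q^m} (your between-level condition ``ascent with gap in $\{1,\dots,m\}$ or descent with gap in $\{0,\dots,m-1\}$'' unwinds to the paper's ``$\alpha_i\le m$, and $\alpha_i=m\Rightarrow\min B_i<\max B_{i+1}$'', since under your tie-breaking the last element at a height level is $\min B_i$ and the first at the next is $\max B_{i+1}$); and your inductive alternative $\mathcal{K}^{m}=\SpList_{+}\circ\mathcal{K}^{m-1}$ is Lemma~\ref{Qm=Lm}. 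The one genuine addition is your direct bijection to labelled linear orders $(\ell,c)$ with $c\colon\{1,\dots,|V|-1\}\to[m]$, which is a clean shortcut around the paper's induction on $m$ but rests on the same combinatorics.
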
 
\begin{corollary}
\begin{align*}
\SpLShi{k}{m}(x) &= \sum_{n=0}^{\infty} \left| L_{k}(\mathcal{S}^{m}_{n}) \right|\frac{x^{n}}{n!} 
= \frac{1}{k!}\left(\frac{x}{1-mx}\right)^{k}, \\
\SpLShi{}{m}(x) &= \sum_{n=0}^{\infty} \left| L(\mathcal{S}^{m}_{n}) \right|\frac{x^{n}}{n!} 
= \exp\left(\frac{x}{1-mx}\right). 
\end{align*}
\end{corollary}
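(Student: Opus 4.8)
The plan is to deduce the corollary formally from Theorem~\ref{main shi} by passing to exponential generating functions. Recall from the paragraph preceding Example~\ref{ex:composition sets} that substitution of species is compatible with substitution of generating functions, i.e.\ $(\SpF\circ\SpG)(x)=\SpF(\SpG(x))$ whenever $\SpG[\varnothing]=\varnothing$. Since $\SpList_{+}[\varnothing]=\varnothing$, the iterate $\SpList_{+}^{\circ m}$ also satisfies $\SpList_{+}^{\circ m}[\varnothing]=\varnothing$, so the compositions $\SpSet_{k}\circ\SpList_{+}^{\circ m}$ and $\SpSet\circ\SpList_{+}^{\circ m}$ occurring in Theorem~\ref{main shi} are well defined and their generating functions are obtained by composing the generating functions of the factors.

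First I would apply the generating-function functor to the first identity of Theorem~\ref{main shi}, obtaining
\begin{align*}
\SpLShi{k}{m}(x)=\SpSet_{k}\!\left(\SpList_{+}^{\circ m}(x)\right).
\end{align*}
By Example~\ref{ex:composition lists} one has $\SpList_{+}^{\circ m}(x)=\frac{x}{1-mx}$, and by Example~\ref{ex:sum} one has $\SpSet_{k}(x)=\frac{x^{k}}{k!}$. Substituting the former series into the latter gives $\SpLShi{k}{m}(x)=\frac{1}{k!}\left(\frac{x}{1-mx}\right)^{k}$; since $\SpLShi{k}{m}(x)=\sum_{n=0}^{\infty}\left|L_{k}(\mathcal{S}^{m}_{n})\right|\frac{x^{n}}{n!}$ by the definition of the generating function of a species, this is exactly the first asserted equality.

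The second equality follows in the same way from $\SpLShi{}{m}=\SpSet\circ\SpList_{+}^{\circ m}$ together with $\SpSet(x)=e^{x}$, which yields $\SpLShi{}{m}(x)=\exp\!\left(\frac{x}{1-mx}\right)$; alternatively, one may sum the first formula over $k\geq 0$ using the canonical decomposition $\SpSet=\sum_{k\geq 0}\SpSet_{k}$ and the expansion $e^{y}=\sum_{k\geq 0}y^{k}/k!$ with $y=\frac{x}{1-mx}$, which doubles as a consistency check. I expect no genuine obstacle here, since all of the combinatorial substance is already contained in Theorem~\ref{main shi}; the only points that deserve a sentence are the legitimacy of the substitutions, noted above, and, if one allows $m=0$, the convention that $\SpList_{+}^{\circ 0}$ is the species of singletons, for which the formula $\SpList_{+}^{\circ m}(x)=\frac{x}{1-mx}$ remains valid.
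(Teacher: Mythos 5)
Your proposal is correct and coincides with the paper's (implicit) argument: the corollary is obtained by applying the generating-function correspondence $(\SpF\circ\SpG)(x)=\SpF(\SpG(x))$ to the species identities of Theorem \ref{main shi}, using $\SpList_{+}^{\circ m}(x)=\frac{x}{1-mx}$, $\SpSet_{k}(x)=\frac{x^{k}}{k!}$, and $\SpSet(x)=e^{x}$ from the cited examples. No gaps; the remarks on well-definedness of the substitutions are fine but not essential.
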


Moreover, we will give explicit formulas for the numbers $ |L_{k}(\mathcal{S}^{m}_{n})| $ and $ |L_{k}(\mathcal{C}^{m}_{n})| $ of $ k $-dimensional flats of $ n $-dimensional extended Catalan and extended Shi arrangements with infinite matrices. 
Let $ [a_{ij}] $ denote the infinite matrix whose entry in the $ i $-th row and the $ j $-th column is $ a_{ij} $, where $ i $ and $ j $ run over all positive integers. 
Let 
\begin{align*}
c \coloneqq \left[ c(j,i) \right] \quad \text{ and } \quad S \coloneqq \left[ S(j,i) \right], 
\end{align*}
where $ c(j,i) $ denote the \textbf{unsigned Stirling number of the first kind}, that is, the number of ways to partition a $ j $-element set into $ i $ cycles. 
Note that most of tables, including Table \ref{tab:triangles}, consisting of such numbers are lower triangular. 
However, our infinite matrices $ c $ and $ S $ are transposed and hence upper triangular. 
Namely, 
\begin{align*}
c = \begin{bmatrix}
1 & 1 & 2 & 6 & 24 & \cdots \\
0 & 1 & 3 & 11 & 50 & \cdots \\
0 & 0 & 1 & 6 & 35 & \cdots \\
0 & 0 & 0 & 1 & 10 & \cdots \\
0 & 0 & 0 & 0 & 1 & \cdots \\
\vdots & \vdots & \vdots & \vdots & \vdots & \ddots
\end{bmatrix}, \qquad
S = \begin{bmatrix}
1 & 1 & 1 & 1 & 1 & \cdots \\
0 & 1 & 3 & 7 & 15 & \cdots \\
0 & 0 & 1 & 6 & 25 & \cdots \\
0 & 0 & 0 & 1 & 10 & \cdots \\
0 & 0 & 0 & 0 & 1 & \cdots \\
\vdots & \vdots & \vdots & \vdots & \vdots & \ddots
\end{bmatrix}. 
\end{align*}
 
\begin{theorem}\label{main matrices}
\begin{align*}
\Big[ \, \big|L_{i}(\mathcal{C}^{m}_{j})\big| \, \Big] = (Sc)^{m}S \quad \text{ and } \quad \Big[ \, \big|L_{i}(\mathcal{S}^{m}_{j})\big| \, \Big] = (Sc)^{m}. 
\end{align*}
\end{theorem}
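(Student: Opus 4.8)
The plan is to convert the species identities of Theorem~\ref{main catalan} and Theorem~\ref{main shi} into products of infinite matrices through a single dictionary. To each $\FinSetB$-species $\mathcal{P}$ with $\mathcal{P}[\varnothing]=\varnothing$ I associate the infinite matrix
\[
M(\mathcal{P}) \coloneqq \big[\, |(\SpSet_{i}\circ\mathcal{P})[j]| \,\big] = \Big[\, \textstyle\sum_{\pi\in\SpPart[j],\,|\pi|=i}\prod_{B\in\pi}|\mathcal{P}[B]| \,\Big],
\]
whose $(i,j)$-entry counts the set partitions of $[j]$ into $i$ blocks equipped with a $\mathcal{P}$-structure on each block; the second equality is immediate from the definition of $\circ$ together with the fact that $|\SpSet_{i}[\pi]|$ is $1$ if $|\pi|=i$ and $0$ otherwise. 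Two remarks will be used repeatedly: $M(\mathcal{P})$ is upper triangular (its $(i,j)$-entry vanishes for $i>j$, so every matrix product below is entrywise a finite sum), and $M(\mathcal{P})$ depends only on the sequence $\big(|\mathcal{P}[\ell]|\big)_{\ell\ge1}$.

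The key lemma will be that $M$ sends substitution to matrix multiplication: $M(\mathcal{F}\circ\mathcal{G}) = M(\mathcal{F})\,M(\mathcal{G})$ whenever $\mathcal{F}[\varnothing]=\mathcal{G}[\varnothing]=\varnothing$. I would prove it from associativity, $\SpSet_{i}\circ(\mathcal{F}\circ\mathcal{G}) = (\SpSet_{i}\circ\mathcal{F})\circ\mathcal{G}$: expanding the right-hand side at $[j]$ over set partitions $\pi$ and grouping by $\ell=|\pi|$ (using that $|(\SpSet_{i}\circ\mathcal{F})[\pi]|$ depends only on $|\pi|$, since $\mathcal{F}$ is a $\FinSetB$-species) gives
\[
\big|(\SpSet_{i}\circ\mathcal{F}\circ\mathcal{G})[j]\big| = \sum_{\ell}\big|(\SpSet_{i}\circ\mathcal{F})[\ell]\big|\cdot\big|(\SpSet_{\ell}\circ\mathcal{G})[j]\big| = \sum_{\ell} M(\mathcal{F})_{i,\ell}\,M(\mathcal{G})_{\ell,j}.
\]
Then I evaluate $M$ on the three building blocks. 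Since $|\SpSet_{+}[\ell]|=1$, the $(i,j)$-entry of $M(\SpSet_{+})$ is the number of set partitions of $[j]$ into $i$ blocks, namely $S(j,i)$, so $M(\SpSet_{+}) = S$. Writing $\SpCycle$ for the species of cyclic orders ($|\SpCycle[\ell]| = (\ell-1)!$ for $\ell\ge1$), the $(i,j)$-entry of $M(\SpCycle)$ is the number of ways to partition $[j]$ into $i$ cyclically ordered blocks, namely $c(j,i)$, so $M(\SpCycle) = c$. Finally, $\SpList_{+}$ and $\SpSet_{+}\circ\SpCycle$ have the same cardinality sequence (both give $\ell!$ structures on an $\ell$-element set — a linear order, respectively a permutation written as a set of cycles), so by the two remarks above and the key lemma $M(\SpList_{+}) = M(\SpSet_{+}\circ\SpCycle) = M(\SpSet_{+})\,M(\SpCycle) = Sc$. (Equivalently, $M(\SpList_{+})_{i,j}$ is the Lah number, and one may invoke the classical identity exhibiting it as $(Sc)_{i,j}$.)

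It then remains to combine these facts with the main theorems. By Theorem~\ref{main shi}, $|L_{i}(\mathcal{S}^{m}_{j})| = |(\SpSet_{i}\circ\SpList_{+}^{\circ m})[j]| = M(\SpList_{+}^{\circ m})_{i,j}$, and iterating the key lemma gives $M(\SpList_{+}^{\circ m}) = M(\SpList_{+})^{m} = (Sc)^{m}$. By Theorem~\ref{main catalan}, $|L_{i}(\mathcal{C}^{m}_{j})| = |(\SpSet_{i}\circ\SpList_{+}^{\circ m}\circ\SpSet_{+})[j]| = M(\SpList_{+}^{\circ m}\circ\SpSet_{+})_{i,j} = \big(M(\SpList_{+})^{m}M(\SpSet_{+})\big)_{i,j} = \big((Sc)^{m}S\big)_{i,j}$, which is the assertion. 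I expect the argument to be essentially bookkeeping once Theorems~\ref{main catalan} and~\ref{main shi} are available; the points deserving care are keeping the transpose conventions straight (dimension indexes rows and ambient dimension indexes columns, so $S$ and $c$ are the transposes of the usual Stirling triangles), checking that $\SpSet_{+}$, $\SpList_{+}$ and all their iterated substitutions satisfy $\mathcal{P}[\varnothing]=\varnothing$ so that $\circ$ and the key lemma apply, and the identification $M(\SpList_{+})=Sc$.
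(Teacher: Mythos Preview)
Your proposal is correct and follows essentially the same route as the paper: establish that substitution of species corresponds to multiplication of the matrices $[\,|(\SpSet_{i}\circ\mathcal{P})[j]|\,]$ (the paper's Proposition~\ref{matrix product}), identify $M(\SpList_{+})=Sc$ via the equinumerosity of lists and permutations $\SpSet\circ\SpCycle_{+}$ (the paper's Example~\ref{Lah matrix}), and then read off the result from Theorems~\ref{main catalan} and~\ref{main shi}. The only difference is cosmetic: you prove the key lemma by a direct bijective count (grouping partitions by their number of blocks via associativity of $\circ$), whereas the paper derives the same identity by manipulating exponential generating functions.
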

From Theorem \ref{main matrices}, we may also calculate the matrices recursively as follows. 
\begin{align*}
\begin{array}{ccccccccccc}
\mathcal{C}^{0} & \stackrel{c}{\longrightarrow} & \mathcal{S}^{1} & \stackrel{S}{\longrightarrow} & \mathcal{C}^{1} & \stackrel{c}{\longrightarrow} & \mathcal{S}^{2} & \stackrel{S}{\longrightarrow} & \mathcal{C}^{2} & \stackrel{c}{\longrightarrow} & \cdots \\
\hspace{-2mm}S && \hspace{-2mm}Sc && \hspace{-2mm}ScS && \hspace{-2mm}ScSc && \hspace{-2mm}ScScS &&
\end{array}
\end{align*}

The \textbf{Lah number} is the number of ways to partition an $ n $-element set into $ k $ nonempty lists, which is equal to the cardinality of $ (\SpSet_{k} \circ \SpList_{+})[n] $ (See Example \ref{ex:substitution} for details). 
It is well known that the Lah number is given by the following formula. 
\begin{proposition}[See {\cite[p.44]{riordan2002introduction}} for example]\label{lah number}
Let $ k $ and $ n $ be nonnegative integers. 
Then
\begin{align*}
\big| (\SpSet_{k} \circ \SpList_{+})[n] \big| = \dfrac{n! (n-1)!}{k!(k-1)!(n-k)!}. 
\end{align*}
\end{proposition}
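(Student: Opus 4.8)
The plan is to compute $\big|(\SpSet_k \circ \SpList_+)[n]\big|$ by a direct count and then recognize the closed form; the interesting range is $1 \leq k \leq n$, the cases $n=0$ or $k>n$ being trivially matched by both sides. Unwinding the definition of substitution, an element of $(\SpSet_k \circ \SpList_+)[n]$ is a set partition $\pi$ of $[n]$ with exactly $k$ blocks together with a choice of linear order (a list) on each block; equivalently, it is an \emph{unordered} collection $\{L_1,\dots,L_k\}$ of $k$ pairwise disjoint nonempty lists whose underlying sets cover $[n]$. Since the blocks of a set partition are nonempty and pairwise disjoint, these lists are pairwise distinct, so the symmetric group on the $k$ positions acts freely on the set of \emph{ordered} tuples $(L_1,\dots,L_k)$ of such lists; hence $\big|(\SpSet_k \circ \SpList_+)[n]\big|$ is $1/k!$ times the number of ordered tuples.

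To count ordered tuples I would set up the bijection sending $(L_1,\dots,L_k)$ to the concatenated permutation $L_1L_2\cdots L_k$ of $[n]$ together with the sequence of lengths $(|L_1|,\dots,|L_k|)$, which is a composition of $n$ into $k$ positive parts; the inverse cuts a given permutation of $[n]$ at the prescribed positions. As $\big|\SpList[n]\big| = n!$ and the number of such compositions is $\binom{n-1}{k-1}$, there are $n!\binom{n-1}{k-1}$ ordered tuples, so
\[
\big|(\SpSet_k \circ \SpList_+)[n]\big| = \frac{1}{k!}\,n!\binom{n-1}{k-1} = \frac{n!\,(n-1)!}{k!\,(k-1)!\,(n-k)!},
\]
which is the asserted formula.

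A generating-function alternative, more in the spirit of the rest of the paper, is to combine $\SpSet_k(x) = x^k/k!$ and $\SpList_+(x) = x/(1-x)$ from Example \ref{ex:sum} into $(\SpSet_k \circ \SpList_+)(x) = \frac{1}{k!}\big(\tfrac{x}{1-x}\big)^k$ and then extract $n!\,[x^n]$ using the binomial series for $(1-x)^{-k}$, which again gives $\frac{n!}{k!}\binom{n-1}{k-1}$. Neither route has a genuine obstacle; the one point deserving a word of care is the division by $k!$, which is legitimate precisely because the parts of a set partition are nonempty and pairwise disjoint, so no two of the lists $L_i$ can coincide and the symmetric group action on ordered tuples is free.
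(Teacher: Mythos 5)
Your proof is correct. Note that the paper does not prove this proposition at all: it is stated with a pointer to Riordan's book, and the closed form is simply quoted later (e.g.\ in the proof of Lemma \ref{lah power}, where $a_{ij}=\frac{j!(j-1)!}{i!(i-1)!(j-i)!}$ is used). So there is no internal argument to compare against; you have supplied a self-contained proof where the paper defers to the literature. Your main route --- identify an element of $(\SpSet_{k}\circ\SpList_{+})[n]$ with an unordered family of $k$ disjoint nonempty lists covering $[n]$, count ordered $k$-tuples via the bijection (tuple) $\leftrightarrow$ (permutation of $[n]$, composition of $n$ into $k$ positive parts) to get $n!\binom{n-1}{k-1}$, and divide by $k!$ --- is the standard bijective derivation, and you correctly flag the one point that needs justification, namely that the $S_{k}$-action on ordered tuples is free because the lists have disjoint nonempty supports. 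Your generating-function alternative, extracting $n!\,[x^{n}]$ from $\frac{1}{k!}\left(\frac{x}{1-x}\right)^{k}$, is actually the argument most consonant with the paper's own machinery (it is the same computation that underlies the corollary to Theorem \ref{main shi} and the Bell-polynomial identity $\big|(\SpSet_{k}\circ\SpList_{+})[n]\big|=B_{n,k}(1!,2!,\dots)$ in \S\ref{sec:bell polynomial}, which the paper states but never reduces to the closed form). The only cosmetic caveat is the boundary interpretation of the formula (e.g.\ $(k-1)!$ or $(n-1)!$ for $k=0$ or $n=0$), but that is an ambiguity of the statement as printed, not of your argument, and your dismissal of the degenerate cases is reasonable.
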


Using the Lah numbers, we give an explicit formula for the number of flats of the extended Shi arrangement. 
\begin{theorem}\label{main shilah}
\begin{align*}
\big|L_{k}(\mathcal{S}^{m}_{n})\big| = m^{n-k}\dfrac{n! (n-1)!}{k!(k-1)!(n-k)!}. 
\end{align*}
\end{theorem}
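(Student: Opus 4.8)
The plan is to combine Theorem~\ref{main shi} with the Lah number formula of Proposition~\ref{lah number}. By Theorem~\ref{main shi} we have a natural isomorphism $\SpLShi{k}{m} = \SpSet_{k}\circ\SpList_{+}^{\circ m}$, so by the definition of substitution,
\begin{align*}
\big|L_{k}(\mathcal{S}^{m}_{n})\big| = \big|(\SpSet_{k}\circ\SpList_{+}^{\circ m})[n]\big| = \sum_{\pi}\,\prod_{B\in\pi}\big|\SpList_{+}^{\circ m}[B]\big|,
\end{align*}
where $\pi$ ranges over the set partitions of $[n]$ into exactly $k$ blocks. Thus the first step is to compute $\big|\SpList_{+}^{\circ m}[B]\big|$ for a $b$-element set $B$. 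From Example~\ref{ex:composition lists} we know $\SpList_{+}^{\circ m}(x) = \tfrac{x}{1-mx} = \sum_{b\geq 1} m^{b-1}x^{b}$, and reading off the coefficient of $x^{b}/b!$ gives $\big|\SpList_{+}^{\circ m}[b]\big| = m^{b-1}\,b!$ for every $b\geq 1$.

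Substituting this into the sum, every term factors as
\begin{align*}
\prod_{B\in\pi} m^{|B|-1}\,|B|! = m^{\sum_{B\in\pi}(|B|-1)}\prod_{B\in\pi}|B|! = m^{n-k}\prod_{B\in\pi}|B|!,
\end{align*}
since $\pi$ has $k$ blocks whose sizes sum to $n$. Hence $\big|L_{k}(\mathcal{S}^{m}_{n})\big| = m^{n-k}\sum_{\pi}\prod_{B\in\pi}|B|!$, the sum still being over partitions of $[n]$ into $k$ blocks. The remaining factor $\sum_{\pi}\prod_{B\in\pi}|B|!$ is precisely $\big|(\SpSet_{k}\circ\SpList_{+})[n]\big|$: by the definition of substitution such a structure is a set partition into $k$ blocks together with a nonempty list (a linear order) on each block, and there are $|B|!$ orders on a block of size $|B|$. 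By Proposition~\ref{lah number} this equals the Lah number $\tfrac{n!(n-1)!}{k!(k-1)!(n-k)!}$, which finishes the proof.

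Alternatively, one can bypass the combinatorics entirely: by the corollary following Theorem~\ref{main shi}, $\SpLShi{k}{m}(x) = \tfrac{1}{k!}\bigl(\tfrac{x}{1-mx}\bigr)^{k} = \tfrac{x^{k}}{k!}\sum_{j\geq 0}\binom{j+k-1}{k-1}m^{j}x^{j}$, so extracting the coefficient of $x^{n}/n!$ (with $j=n-k$) yields $\big|L_{k}(\mathcal{S}^{m}_{n})\big| = \tfrac{n!}{k!}\binom{n-1}{k-1}m^{n-k}$, which is the claimed expression after expanding the binomial coefficient. I do not expect a serious obstacle; the only point needing care is the identity $\big|\SpList_{+}^{\circ m}[b]\big| = m^{b-1}b!$, which is immediate from Example~\ref{ex:composition lists} (and, if a purely bijective argument is wanted, an iterated list structure on a $b$-set can be encoded as a linear order of its elements together with an $m$-ary choice at each of the $b-1$ consecutive gaps). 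The degenerate cases ($k=0$, or $n<k$) are covered by the usual conventions on factorials and binomial coefficients, under which both sides vanish except when $n=k=0$.
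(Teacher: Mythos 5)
Your argument is correct, but it takes a different route from the paper. The paper proves Theorem~\ref{main shilah} as a consequence of its matrix machinery: it specializes Theorem~\ref{main matrices} (namely $\bigl[\,|L_{i}(\mathcal{S}^{m}_{j})|\,\bigr]=(Sc)^{m}$, where $Sc$ is the Lah matrix) and then proves a separate Lemma~\ref{lah power}, by induction on $m$ with a binomial-theorem computation, showing that the $m$-th power of the Lah matrix $[a_{ij}]$ is $[m^{j-i}a_{ij}]$; combining this with Proposition~\ref{lah number} gives the formula. You instead work directly from Theorem~\ref{main shi}: you unwind the substitution $\SpSet_{k}\circ\SpList_{+}^{\circ m}$, use $\bigl|\SpList_{+}^{\circ m}[b]\bigr|=m^{b-1}b!$ (read off from the generating function $\tfrac{x}{1-mx}$ of Example~\ref{ex:composition lists}), pull the factor $m^{n-k}$ out of each partition into $k$ blocks, and recognize the remaining sum $\sum_{\pi}\prod_{B\in\pi}|B|!$ as the Lah number, which Proposition~\ref{lah number} evaluates; your coefficient-extraction variant from the corollary's series $\tfrac{1}{k!}\bigl(\tfrac{x}{1-mx}\bigr)^{k}$ is an equally valid one-line alternative. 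The trade-off: your proof is more elementary and self-contained, bypassing Theorem~\ref{main matrices} and the induction of Lemma~\ref{lah power} entirely (the factorization $m^{|B|-1}$ per block plays the role that the binomial identity plays there), while the paper's route yields the matrix-power statement itself, which is of independent interest and treats the Catalan and Shi families uniformly. The only point to keep honest is that $\SpList_{+}^{\circ m}(x)=\tfrac{x}{1-mx}$ is asserted (not proved) in Example~\ref{ex:composition lists}, but your parenthetical bijection (a linear order plus an $m$-ary choice at each of the $b-1$ gaps) covers that adequately.
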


The organization of this paper is as follows. 

In \S \ref{sec:species}, we give some examples of substitutions of species. 
Also, we introduce tree notation for the species $ \SpList_{+}^{\circ m} $. 
Although actually this notation is not required for the proofs of our main theorems, it helps us to recognize elements of $ \SpList_{+}^{\circ m} $. 

In \S \ref{sec:gain graph}, we review the theory of gain graphs developed by Zaslavsky. 
Since the extended Catalan and Shi arrangements are expressed by using gain graphs, the intersection posets of these arrangements can be represented by a kind of partitions of the vertices of the corresponding gain graphs. 
This guarantees that it suffices to know the $ 1 $-dimensional flats in order to know all flats, that is, $ \SpLCat{}{m} = \SpSet \circ \SpLCat{1}{m} $ and $ \SpLShi{}{m} = \SpSet \circ \SpLShi{1}{m} $. 

Finally in \S \ref{sec:proofs}, we will give proofs of Theorem \ref{main catalan}, Theorem \ref{main shi}, Theorem \ref{main matrices}, and Theorem \ref{main shilah}. 


In \S \ref{sec:table}, as an appendix we give numerical tables for the number of flats with ID numbers in the On-Line Encyclopedia of Integer Sequences (OEIS) \cite{OEIS}. 

Note that, throughout of this paper, we will construct several natural transformations. 
However, we will omit proofs of commutativity of diagrams for the natural transformations since they are obvious. 

\section{Substitution of species and tree notation}\label{sec:species}
\subsection{Examples}

Let $ \SpF $ and $ \SpG $ be species with $ \SpG[\varnothing] = \varnothing $. 
Recall the definition of the substitution $ \SpF \circ \SpG $. 
\begin{align*}
(\SpF \circ \SpG)[V] \coloneqq \bigsqcup_{\pi \in \SpPart[V]} \left(\SpF[\pi] \times \prod_{B \in \pi}\SpG[B]\right), 
\end{align*}
where $ \SpPart $ denotes the species of set partitions. 

The substitution $ \SpF \circ \SpG $ has the external and internal structure. 
Namely, for each set partition $ \pi $, we take an element of $ \SpF[\pi] $, which is the external structure. 
For every block $ B $ of $ \pi $, we choose an element of $ \SpG[B] $, which is the internal structure. 
In the usual case, a constituent of the external structure is labeled with blocks of a partition and if we ``substitute" the labels with internal structures, then we obtain an element of $ (\SpF \circ \SpG)[V] $. 

\begin{example}\label{ex:substitution}
Let $ \SpG $ be a species such that if $ g \in \SpG[B] , g^{\prime} \in \SpG[B^{\prime}]  $, and $ g = g^{\prime} $, then $ B=B^{\prime} $. 
We will see that the species $ \SpSet \circ \SpG $ may be considered as a species of set partitions consisting of $ \SpG $-structures. 
By definition,
\begin{align*}
(\SpSet \circ \SpG)[V] &= \bigsqcup_{\pi \in \SpPart[V]} \left(\SpSet[\pi] \times \prod_{B \in \pi}\SpG[B]\right) = \bigsqcup_{\pi \in \SpPart[V]} \left(\{ \pi \}\times \prod_{B \in \pi}\SpG[B]\right). 
\end{align*}
For any set partition $ \pi = \{B_{1}, \dots, B_{k}\} $, every element $ \left(\pi , (g_{B_{1}}, \dots, g_{B_{k}}) \right) \in \{ \pi \}\times \prod_{B \in \pi}\SpG[B] $ can be identified with the set $ \{g_{B_{1}}, \dots, g_{B_{k}}\} $. 
Then the set $ (\SpSet \circ \SpG)[V] $ is identified with 
\begin{align*}
\Set{ \{g_{B_{1}}, \dots, g_{B_{k}}\} | \{B_{1}, \dots, B_{k}\} \in \SpPart[V], \ g_{B_{i}} \in \SpG[B_{i}]  }. 
\end{align*}
In particular, $ \SpSet \circ \SpSet_{+} $ gives set partitions consisting of sets, that is, ordinary set partitions. 
Namely $ \SpSet \circ \SpSet_{+} = \SpPart $. 
Therefore $ \big| (\SpSet_{k} \circ \SpSet_{+})[n] \big| $ is the Stirling number of the second kind. 
For the same reason $ \big| (\SpSet_{k} \circ \SpList_{+})[n] \big| $ yields the Lah number. 
\end{example}

We sometimes omit commas in sets, lists, and so on.
For example, we write $ \{123\} $ instead of $ \{1,2,3\} $.

\begin{example}
Let $ \SpG $ be as above. 
By the similar discussion, we may say that the species $ \SpList \circ \SpG $ may be considered as a species of lists consisting of $ \SpG $-structures. 
Namely the set $ (\SpList \circ \SpG)[V] $ can be identified naturally with 
\begin{align*}
\Set{ \left(g_{B_{1}}, \cdots, g_{B_{k}}\right) | \{B_{1}, \dots, B_{k}\} \in \SpPart[V], \ g_{B_{i}} \in \SpG[B_{i}] }. 
\end{align*}
The species $ \SpList \circ \SpSet_{+} $ is known as a species of \textbf{set compositions} (or \textbf{ordered set partitions}). 
The cardinality $ \left|(\SpList \circ \SpSet_{+})[n]\right| $ is called the \textbf{ordered Bell number} (or \textbf{Fubini number}). 
For instance $ \left|(\SpList \circ \SpSet_{+})[3]\right| = 13 $ since it consists of the following set compositions. 
\begin{align*}
&(\{123\}), (\{12\} \{3\}), (\{13\} \{2\}), (\{23\} \{1\}), (\{3\} \{12\}), (\{2\} \{13\}), (\{1\} \{23\}),  \\
&(\{1\}\{2\}\{3\}),(\{1\}\{3\}\{2\}),(\{2\}\{1\}\{3\}),(\{2\}\{3\}\{1\}),(\{3\}\{1\}\{2\}),(\{3\}\{2\}\{1\}). 
\end{align*}
\end{example}

\begin{example}
The species $ \SpSet_{1} $ of singletons behaves as the identity element with respect to substitution. 
Namely, $ \SpSet_{1} \circ \SpG = \SpG $ and $ \SpF \circ \SpSet_{1} = \SpF $. 
\end{example}

A lot of researchers have been studied iterated substitutions of species of sets and lists. 
For example, Motzkin \cite{motzkin1971sorting-c} investigated several structures including, ``sets of sets" $ \SpSet \circ \SpSet_{+} $, ``sets of lists" $ \SpSet \circ \SpList_{+} $, ``lists of sets" $ \SpList \circ \SpSet_{+} $, and ``lists of lists" $ \SpList \circ \SpList_{+} $. 
Sloane and Wieder \cite{sloane2004number-o} call an element of $ (\SpSet \circ \SpList_{+} \circ \SpSet_{+})[n] $ a \textbf{hierarchical ordering} (or \textbf{society}). 
Callan \cite[Section 2]{callan2008sets-jois} gave a bijection between lists of noncrossing sets and sets of lists $ \SpSet \circ \SpList_{+} $. 
Hedmark \cite[Subsection 5.2]{hedmark2017partition} introduced an $ \alpha $-colored partition lattice for a positive integer $ \alpha $ and stated that it can be regarded as $ \SpList_{+}^{\circ \alpha} \circ \SpSet_{+} $. 

\subsection{Tree notation for $ \SpList_{+}^{\circ m} $}
The species $ \SpList_{+}^{\circ m} $ can be considered as the species of ``$ m $-dimensional lists". 
For example $ (((49)(5))((3)(71)(6))((82))) $ is an element of $ \SpList_{+}^{\circ 3}[9] $. 
However, it is difficult to understand the structure of this at first glance. 
Hence we introduce tree notation for $ \SpList_{+}^{\circ m} $. 

The idea is very simple. 
We just regard a list as an ordered rooted tree of height one with labeled leaves, where an \textbf{ordered rooted tree} means a rooted tree whose sibling sets are linearly ordered as lists. 
For example the lists $ (123) $ and $ (2413) $ are expressed as in Figure \ref{fig:trees}. 
\begin{figure}[t]
\centering
\begin{tikzpicture}
\draw (0,1) node[v](r){};
\draw (-0.8,0) node[l](1){1};
\draw (0,0) node[l](2){2};
\draw (0.8,0) node[l](3){3};
\draw (r)--(1);
\draw (r)--(2);
\draw (r)--(3);
\end{tikzpicture}
\hspace{15mm}
\begin{tikzpicture}
\draw (0,1) node[v](r){};
\draw (-1.2,0) node[l](1){2};
\draw (-0.4,0) node[l](2){4};
\draw (0.4,0) node[l](3){3};
\draw (1.2,0) node[l](4){1};
\draw (r)--(1);
\draw (r)--(2);
\draw (r)--(3);
\draw (r)--(4);
\end{tikzpicture}
\caption{$ (123) $ and $ (2431) $}
\label{fig:trees}
\end{figure}

Then the species $ \SpList_{+}^{\circ m} $ can be regarded as a rooted tree of height $ m $. 
For example, 
\begin{align*}
(((49)(5))((3)(71)(6))((82))) \in \SpList_{+}^{\circ 3}[9]
\end{align*}
is expressed as the left in Figure \ref{fig:trees2}. 
We also can express elements of $ \SpList_{+}^{\circ m} \circ \SpSet_{+} $ by taking labels consisting of sets. 
For example 
\begin{align*}
((\{57\}\{3\})(\{149\}\{26\}\{8\})) \in (\SpList_{+}^{\circ 2} \circ \SpSet_{+})[9]
\end{align*}
is as the right in Figure \ref{fig:trees2}. 
\begin{figure}[t]
\centering
\begin{tikzpicture}[scale=.75]
\draw (0,0) node[v](r){};
\draw (-3.6,-3) node[l](c1){4}; 
\draw (-2.8,-3) node[l](c2){9}; 
\draw (-2   ,-3) node[l](c3){5}; 
\draw (-1.2,-3) node[l](c4){3}; 
\draw (-0.4,-3) node[l](c5){7}; 
\draw ( 0.4,-3) node[l](c6){1}; 
\draw ( 1.2,-3) node[l](c7){6}; 
\draw ( 2   ,-3) node[l](c8){8}; 
\draw ( 2.8,-3) node[l](c9){2}; 
\draw (-3.2,-2) node[v](b1){};
\draw (-2   ,-2) node[v](b2){};
\draw (-1.2,-2) node[v](b3){};
\draw ( 0    ,-2) node[v](b4){};
\draw ( 1.2,-2) node[v](b5){};
\draw ( 2.4,-2) node[v](b6){};
\draw (-2.6,-1) node[v](a1){};
\draw ( 0    ,-1) node[v](a2){};
\draw ( 2.4,-1) node[v](a3){};
\draw (r)--(a1);
\draw (r)--(a2);
\draw (r)--(a3);
\draw (a1)--(b1);
\draw (a1)--(b2);
\draw (a2)--(b3);
\draw (a2)--(b4);
\draw (a2)--(b5);
\draw (a3)--(b6);
\draw (b1)--(c1);
\draw (b1)--(c2);
\draw (b2)--(c3);
\draw (b3)--(c4);
\draw (b4)--(c5);
\draw (b4)--(c6);
\draw (b5)--(c7);
\draw (b6)--(c8);
\draw (b6)--(c9);
\end{tikzpicture}
\qquad
\begin{tikzpicture}
\draw (0,0) node[v](r){};
\draw (-2.6,-2) node[l](b1){\{57\}};
\draw (-1.35,-2) node[l](b2){\{3\}};
\draw (0,-2) node[l](b3){\{149\}};
\draw (1.4,-2) node[l](b4){\{26\}};
\draw (2.6,-2) node[l](b5){\{8\}};
\draw (-1.9,-0.9) node[v](a1){};
\draw (1.3,-0.9) node[v](a2){};
\draw (r)--(a1);
\draw (r)--(a2);
\draw (a1)--(b1);
\draw (a1)--(b2);
\draw (a2)--(b3);
\draw (a2)--(b4);
\draw (a2)--(b5);
\end{tikzpicture}
\caption{Elements in $ \SpList_{+}^{\circ 3}[9] $ and $ (\SpList_{+}^{\circ 2} \circ \SpSet_{+})[9] $}
\label{fig:trees2}
\end{figure}

\section{Gain graphs and the associated posets}\label{sec:gain graph}
\subsection{Review of graphic arrangements}
First we recall graphic arrangements and their intersection lattices. 
Let $ \Gamma = (V_{\Gamma}, E_{\Gamma}) $ be a simple graph on vertex set $  V_{\Gamma} = [n] $. 
We can associate $ \Gamma $ with a hyperplane arrangement $ \mathcal{A}_{\Gamma} $ in $ \mathbb{R}^{n} $, called the \textbf{graphic arrangement}, consisting of hyperplanes defined by $ \{x_{i} - x_{j} = 0\} $ with $ \{i,j\} \in E_{\Gamma} $, where $ x_{1}, \dots, x_{\ell} $ denote coordinates of $ \mathbb{R}^{\ell} $. 

It is well known that the intersection lattice $ L(\mathcal{A}_{\Gamma}) $ can be represented by using set partitions as explained below. 
A \textbf{connected partition} of $ \Gamma $ is a set partition of $ V_{\Gamma} $ whose every block induces a connected subgraph of $ \Gamma $. 
Let $ L(\Gamma) $ be the set of all connected partitions of $ \Gamma $ with the partial order defined by refinement. 
Namely, $ \pi \leq \pi^{\prime} $ if each block of $ \pi^{\prime} $ is the union of some blocks of $ \pi $. 
We call $ L(\Gamma) $ the \textbf{lattice of connected partitions} (or the \textbf{lattice of contractions}), which is naturally isomorphic to $ L(\mathcal{A}_{\Gamma}) $. 

Note that the braid arrangement $ \mathcal{B}_{n} $ can be regarded as a graphic arrangement with the complete graph $ K_{n} $. 
Since $ L(K_{n}) $ consists of all set partitions of $ [n] $, the number of flats of the braid arrangement is associated with the Bell number and the Stirling number of the second kind. 

\subsection{Integral gain graphs and affinographic arrangements}
The extended Catalan and Shi arrangements are represented by using gain graphs. 
Gain graphs were introduced by Zaslavsky \cite{zaslavsky1989biased-joctsb} for abstraction of linear independence of hyperplanes of the form $ \{x_{i}-x_{j} = a\} $ with $ a \in \mathbb{Z} $. 
Roughly speaking, a gain graph is a graph with labeled edges $ (i,j,a) $, which corresponds to the $ \{x_{i}-x_{j} = a\} $. 
However, since $ \{x_{i}-x_{j} = a\} = \{x_{j}-x_{i} = -a\} $, we must identify $ (i,j,a) $ with $ (j,i,-a) $. 
Here we give a formal definition of gain graphs which are required in this article. 
See the paper by Zaslavsky \cite{zaslavsky1989biased-joctsb} for a general treatment. 

\begin{definition}
An \textbf{integral gain graph} is a pair $ \Gamma = (V_{\Gamma}, E_{\Gamma}) $ satisfying the following conditions. 
\begin{enumerate}[(i)]
\item $ V_{\Gamma} $ is a finite set. 
\item $ E_{\Gamma} $ is a finite subset of $ \Set{ (u,v, a) \in V_{\Gamma} \times V_{\Gamma} \times \mathbb{Z} | u \neq v} $ divided by the equivalence relation $ \sim $ generated by $ (u,v, a) \sim (v,u, -a) $. 
\end{enumerate}
Let $ \{u,v\}_{a} $ denote the equivalence class containing $ (u,v,a) $. Then $ \{u,v\}_{a} = \{v,u\}_{-a} $. 
Elements in $ V_{\Gamma} $ and $ E_{\Gamma} $ are called vertices and edges of the gain graph $ \Gamma $. 
\end{definition}

\begin{definition}
Suppose that $ \Gamma $ is an integral gain graph on $ [n] $. 
Define an affine arrangement $ \mathcal{A}_{\Gamma} $ in $ \mathbb{R}^{n} $ by 
\begin{align*}
\mathcal{A}_{\Gamma} \coloneqq \Set{ \{x_{i}-x_{j} = a \} | \{i,j\}_{a} \in E_{\Gamma}}. 
\end{align*}
We call $ \mathcal{A}_{\Gamma} $ the \textbf{affinographic arrangement} of $ \Gamma $. 
\end{definition}

Note that every simple graph is regarded as a gain graph by regarding $ \{u, v\} $ as $ \{u,v\}_{0} $. 
So every graphic arrangement is considered to be an affinographic arrangement. 
Hence there is no confusion to use the same symbol $ \mathcal{A}_{\Gamma} $ for the graphic and affinographic arrangements. 

\begin{definition}
Let $ A $ be a finite subset of $ \mathbb{Z} $. 
For every positive integer $ n $, define $ K^{A}_{n} $ as an integral gain graph on $ [n] $ with edges 
\begin{align*}
E_{K^{A}_{n}} \coloneqq \Set{ \{i,j\}_{a} | 1 \leq i < j \leq n, \, a \in A}. 
\end{align*}
We call $ K^{A}_{n} $ the \textbf{complete gain graph with gain $ A $}. 
\end{definition}

For integers $ a,b $ with $ a \leq b $, let $ [a,b]:= \{a, a+1, \dots, b\} \subseteq \mathbb{Z}$. 

\begin{example}
The extended Catalan arrangements and the extended Shi arrangements are obtained by
\begin{align*}
\mathcal{C}^{m}_{n} = \mathcal{A}_{K^{[-m,m]}_{n}} \quad \text{ and } \quad \mathcal{S}^{m}_{n} = \mathcal{A}_{K^{[1-m,m]}_{n}}. 
\end{align*} 
\end{example}

\subsection{Poset of connected partitions}
A \textbf{height function} on a finite set $ B $ is a map $ h \colon B \to \mathbb{Z} $ such that $ \min(h(B)) = 0 $. 

Let $ \Gamma $ be an integral gain graph. 
Consider a pair $ (B,h) $, where $ B \subseteq V_{\Gamma} $ and $ h $ is a height function on $ B $. 
Define $ \Gamma[B, h] $ as the integral gain graph on $ B $ with edges 
\begin{align*}
E_{\Gamma[B,h]} \coloneqq & \Set{ \{u,v\}_{a} \in E_{\Gamma} | u,v \in B, \ h(u)+a=h(v) } \\
 = &\Set{ \{u,v\}_{-h(u)+h(v)} \in E_{\Gamma} | u,v \in B}. 
\end{align*}

Let $ v_{1}, v_{2}, \dots, v_{r} $ be distinct vertices of an integral gain graph $ \Gamma $. 
A \textbf{path} on the vertices $ v_{1}, \dots, v_{r} $ is a set of edges $ \{v_{1},v_{2}\}_{a_{1}}, \{v_{2},v_{3}\}_{a_{2}}, \dots, \{v_{r-1},v_{r}\}_{a_{r-1}} $. 
We say that $ \Gamma $ is \textbf{connected} if there exists a path joining any two distinct vertices. 
A \textbf{connected partition} of $ \Gamma $ is a collection $ \pi = \{(B_{1}, h_{1}), \dots, (B_{k}, h_{k}) \} $ such that $ \{B_{1}, \dots, B_{k}\} $ is a set partition of $ V_{\Gamma} $ and each $ \Gamma[B_{i}, h_{i}] $ is connected. 
An element of $ \pi $ is called a \textbf{block}. 

Let $ \pi $ and $ \pi^{\prime} $ be connected partitions. 
We say that $ \pi $ \textbf{refines} $ \pi^{\prime} $, denoted by $ \pi \leq \pi^{\prime} $ if for any $ (B,h) \in \pi $ there exist $ (B^{\prime}, h^{\prime}) \in \pi^{\prime} $ and $ g \in \mathbb{Z} $ such that $ B \subseteq B^{\prime} $ and $ h(v) = g + h^{\prime}(v) $ for any $ v \in B $. 
Let $ L(\Gamma) $ denote the set of all connected partitions, which forms a poset together with the refinement.
Call $ L(\Gamma) $ the \textbf{poset of connected partitions} of $ \Gamma $. 

Note that when a simple graph $ \Gamma $ is viewed as a gain graph, connected partitions coincide with the usual connected partitions of the simple graph $ \Gamma $.
Therefore the poset of connected partitions of a gain graph is a generalization of the lattice of connected partitions of a simple graph. 

\begin{remark}
For a general gain graph with gain group $ G $, we need notion of $ G $-labeled set to define connected partitions. 
However, for integral gain graphs, we need only height functions, which were introduced by Corteel, Forge, and Ventos \cite{corteel2015bijections-ejoc}. 
\end{remark}

\begin{theorem}\label{part-arr}
Let $ \Gamma $ be an integral gain graph on $ [n] $. 
\begin{enumerate}[(1)]
\item\label{part-arr 1} If $ (B,h) $ is a block of a connected partition of $ \Gamma $ with $ B = \{i_{1}, \dots, i_{r}\} $, then 
\begin{align*}
X_{(B,h)} \coloneqq \bigcap_{\{i,j\}_{a} \in E_{\Gamma[B,h]}}\{x_{i}-x_{j}=a\}
= \{x_{i_{1}}+h(i_{1}) = \cdots = x_{i_{r}}+h(i_{r}) \}. 
\end{align*}
\item\label{part-arr 2} The following map is an isomorphism of posets. 
\begin{align*}
\begin{array}{rcl}
L(\Gamma) & \longrightarrow & L(\mathcal{A}_{\Gamma}) \\
\pi & \longmapsto & \displaystyle\bigcap_{(B,h) \in \pi} X_{(B,h)}. 
\end{array}
\end{align*}
\item\label{part-arr 3} For any nonnegative integer $ k $, the isomorphism in (\ref{part-arr 2}) induces a bijection between $ L_{k}(\Gamma) $ and $ L_{k}(\mathcal{A}_{\Gamma}) $, where 
\begin{align*}
L_{k}(\Gamma) \coloneqq \Set{\pi \in L(\Gamma) | \ |\pi| = k}
\end{align*}
and $ |\pi| $ denotes the number of blocks of $ \pi $. 
\end{enumerate} 
\end{theorem}
\begin{proof}
(\ref{part-arr 1}) 
Recall $ E_{\Gamma[B,h]} = \Set{ \{i,j\}_{-h(i)+h(j)} \in E_{\Gamma} | i,j \in B} $. 
The hyperplane corresponding to the edge $ \{i,j\}_{-h(i)+h(j)} $ is 
\begin{align*}
\{x_{i}-x_{j} = -h(i)+h(j) \} = \{x_{i}+h(i) = x_{j}+h(j) \}. 
\end{align*}
Since $ \Gamma[B,h] $ is connected, the intersection coincides with the right hand side. 

(\ref{part-arr 2}) 
This follows from theorems by Zaslavsky {\cite[Corollary 4.5(a)]{zaslavsky2003biased-joctsb}}, {\cite[Lemma 3.1A and 3.1B]{zaslavsky1985geometric}}. 

(\ref{part-arr 3})
Obvious from (\ref{part-arr 2}). 
\end{proof}

\subsection{Partitional decompositions}
Let $ A $ be a finite subset of $ \mathbb{Z} $. 
The construction of the complete gain graph with gain $ A $ is considered to be an $ \FinLinSetL $-species. 
Namely, for each finite linearly ordered set $ V $, we may construct the complete gain graph $ K_{V}^{A} $ on $ V $ with gain $ A $.
Hence, there exist $ \FinLinSetL $-species $ \SpLK{}{A} $ and $ \SpLK{k}{A} $ such that $ \SpLK{}{A}[n] = L(K^{A}_{n}) $ and $ \SpLK{k}{A}[n] = L_{k}(K^{A}_{n}) $. 
In particular, if $ -A = A $, then the $ \FinLinSetL $-species $ \SpLK{}{A} $ and $ \SpLK{k}{A} $ may be considered to be species. 
\begin{lemma}\label{part decomp}
Let $ A $ be a finite subset of $ \mathbb{Z} $. 
Then 
\begin{align*}
\SpLK{k}{A} = \SpSet_{k} \circ \SpLK{1}{A} \quad \text{ and } \quad \SpLK{}{A} = \SpSet \circ \SpLK{1}{A}
\end{align*}
In particular, 
\begin{align*}
\SpLCat{k}{m} &= \SpLK{k}{[-m,m]} = \SpSet_{k} \circ \SpLK{1}{[-m.m]}, 
\qquad
\SpLCat{}{m} = \SpLK{}{[-m,m]} = \SpSet_{} \circ \SpLK{1}{[-m.m]}, 
\end{align*}
\begin{align*}
\SpLShi{k}{m} = \SpLK{k}{[1-m,m]} = \SpSet_{k} \circ \SpLK{1}{[1-m.m]}, 
\qquad
\SpLShi{}{m} = \SpLK{}{[1-m,m]} = \SpSet_{} \circ \SpLK{1}{[1-m.m]}. 
\end{align*}
\end{lemma}
\begin{proof}
Let $ (B,h) $ be a block of a connected partition of $ K^{A}_{n} $. 
Then the block $ (B,h) $ may be regarded as an element of $ \SpLK{1}{A}[B] $. 
This identification leads to the natural isomorphisms $ \SpLK{k}{A} = \SpSet_{k} \circ \SpLK{1}{A} $ and $ \SpLK{}{A} = \SpSet \circ \SpLK{1}{A} $. 
The rest follows by Theorem \ref{part-arr}. 
\end{proof}

\section{Proofs}\label{sec:proofs}

\subsection{Proof of Theorem \ref{main catalan}}
In order to describe $ \SpLCat{}{m} $, it is sufficient to determine the species $ \SpLK{1}{[-m,m]} $ by Lemma \ref{part decomp}. 
Given a finite set $ V $, an element of $ \SpLK{1}{[-m,m]}[V] = L_{1}(K_{V}^{[-m,m]}) $ is identified with a height function $ h $ on $ V $ such that $ K_{V}^{[-m,m]}[V,h] $ is connected. 
In order to characterize such functions, define $ \gap(h) $ and $ \level(h) $ for each height functions as follows. 

Suppose that $ h(V) = \{a_{1}, \dots, a_{k}\} $ with $ 0=a_{1} < a_{2} < \dots < a_{k} $. 
Then we define 
\begin{align*}
\level(h) &\coloneqq \left(h^{-1}(a_{1}), h^{-1}(a_{2}), \dots, h^{-1}(a_{k})\right), \\
\gap(h) &\coloneqq (a_{2}-a_{1}, a_{3}-a_{2}, \dots, a_{k}-a_{k-1}).  
\end{align*}
Let $ \max(\gap(h)) $ denote the maximum of entries of $ \gap(h) $. 

\begin{lemma}\label{connectedness Catalan}
Let $ V $ be a finite set and $ h $ a height function on $ V $. 
For every nonnegative integer $ m $, $ K_{V}^{[-m,m]}[V,h] $ is connected if and only if $ \max(\gap(h)) \leq m $. 
\end{lemma}
\begin{proof}
Let $ \level(h) = (B_{1}, \dots, B_{k}) $ and $ \gap(h) = (\alpha_{1}, \dots, \alpha_{k-1}) $. 
First suppose that $ K_{V}^{[-m,m]}[V,h] $ is connected. 
Then for every $ i \in \{1, \dots, k-1\} $ there exists an edge $ \{u,v\}_{a} $, where $ u \in B_{1} \cup \dots \cup B_{i} $ and $ v \in B_{i+1} \cup \dots \cup B_{k} $. 
Then $ \alpha_{i} \leq h(v)-h(u) = a \leq m $. 
Thus $ \max(\gap(h)) \leq m $. 

To prove the converse, suppose that $ \max(\gap(h)) \leq m $. 
Suppose that $ u, v \in B_{i} $ for some $ i \in \{1, \dots, k\} $. 
Then $ \{u,v\}_{0} $ is an edge of $ K_{V}^{[-m,m]}[V,h] $. 
Now, let $ i \in \{1, \dots, k-1\} $ and take vertices $ u \in B_{i} $ and $ v \in B_{i+1} $. 
Then $ h(v)-h(u) = \alpha_{i} \leq \max(\gap(h)) \leq m $. 
Therefore $ \{u, v\}_{\alpha_{i}} $ is an edge of $ K_{V}^{[-m,m]}[V,h] $. 
Hence we can deduce $ K_{V}^{[-m,m]}[V,h] $ is connected. 
\end{proof}

\begin{lemma}\label{decomposition catalan}
Let $ m $ be a nonnegative integer. 
Then $ \SpLK{1}{[-m,m]} = \SpList_{+}^{\circ m} \circ \SpSet_{+} $. 
\end{lemma}
\begin{proof}
We proceed by induction on $ m $. 
First suppose that $ m = 0 $. 
Let $ V $ be a finite set. 
Then $ K_{V}^{\{0\}}[V,h] $ is connected if and only if $ h $ is identically $ 0 $ by Lemma \ref{connectedness Catalan}. 
Hence the identification $ (V,0) $ with $ V $ yields the natural isomorphism $ \SpLK{1}{\{0\}} = \SpSet_{+} $. 

Now suppose that $ m \geq 1 $. 
We will prove $ \SpLK{1}{[-m,m]} = \SpList_{+} \circ \SpLK{1}{[-(m-1),m-1]} $. 
Define a natural transformation $ \eta \colon \SpLK{1}{[-m,m]} \to \SpList_{+} \circ \SpLK{1}{[-(m-1),m-1]} $ as follows. 
Let $ V $ be a finite set and $ h $ be a height function on $ V $ such that $ K_{V}^{[-m,m]}[V,h] $ is connected. 
Let $ \level(h) = (B_{1}, \dots, B_{k}) $ and $ \gap(h) = (\alpha_{1}, \dots, \alpha_{k-1}) $ and suppose that $ \Set{j | \alpha_{j} = m} = \{j_{1}, \dots, j_{s-1}\} $ with $ j_{1} < \dots < j_{s-1} $. 
For each $ i \in \{1, \dots, s\} $ define $ C_{i} \coloneqq B_{j_{i-1}+1} \cup B_{j_{i-1}+2} \cup \dots \cup B_{j_{i}} $, where $ j_{0} \coloneqq 0 $ and $ j_{s} \coloneqq k $. 
Define the height function $ h_{i} $ on $ C_{i} $ by $ h_{i}(v) \coloneqq h(v) - \min(h(C_{i})) \quad (v \in C_{i}) $. 
Then $ \max(\gap(h_{i})) \leq m-1 $ and hence $ (C_{i}, h_{i}) $ is an element of $ \SpLK{1}{[-(m-1),m-1]}[C_{i}] $. 
Therefore we define $ \eta_{V} $ by $ \eta_{V}\left( (V,h) \right) \coloneqq \left( (C_{1}, h_{1}), \dots, (C_{s}, h_{s}) \right) $. 

Next we will construct another natural transformation $ \xi \colon \SpList_{+} \circ \SpLK{1}{[-(m-1),m-1]} \to \SpLK{1}{[-m,m]} $ as follows. 
Let $ V $ be a finite set and take an element $ \left( (C_{1}, h_{1}), \dots, (C_{s}, h_{s}) \right) $ from $ \SpList_{+} \circ \SpLK{1}{[-(m-1),m-1]} $. 
Then $ \{C_{1}, \dots, C_{s}\} $ is a set partition of $ V $ and $ (C_{i}, h_{i}) \in \SpLK{1}{[-(m-1),m-1]}[C_{i}] $ for every $ i \in \{1, \dots, s\} $. 
By Lemma \ref{connectedness Catalan} $ \max(\gap(h_{i})) \leq m-1 $. 
Let $ h $ be the height function on $ V $ defined by 
$ h(v) \coloneqq h_{i}(v) + \sum_{j = 1}^{i-1}\left( m + \max(h_{j}(C_{j})) \right) $ for $
v \in C_{i}.  $
Since $ \max(\gap(h)) \leq m $, $ (V,h) \in \SpLK{1}{[-m,m]}[V] $. 
Thus we may define $ \xi_{V} $ by $ \xi_{V}\left( \left( (C_{1}, h_{1}), \dots, (C_{s}, h_{s}) \right) \right) \coloneqq (V,h) $. 

It is obvious that $ \eta $ and $ \xi $ are inverse to each other. 
Therefore $ \SpLK{1}{[-m,m]} = \SpList_{+} \circ \SpLK{1}{[-(m-1),m-1]} $. 
By induction hypothesis, we can conclude that $ \SpLK{1}{[-m,m]} = \SpList_{+}^{\circ m} \circ \SpSet_{+} $. 
\end{proof}

\begin{proof}[Proof of Theorem \ref{main catalan}]
Use Lemma \ref{part decomp} and Lemma \ref{decomposition catalan}. 
\end{proof}

\begin{example}\label{ex:catalan}
Consider $ ((B_{1}B_{2})(B_{3}B_{4}B_{5})) = ((\{57\}\{3\})(\{149\}\{26\}\{8\})) \in (\SpList_{+}^{\circ 2} \circ \SpSet_{+})[9] $ (See Figure \ref{fig:example catalan}). 
\begin{figure}[t]
\centering
\begin{tikzpicture}
\draw (0,0) node[v](r){};
\draw (-2.6,-2) node[l](b1){\{57\}};
\draw (-1.35,-2) node[l](b2){\{3\}};
\draw (0,-2) node[l](b3){\{149\}};
\draw (1.4,-2) node[l](b4){\{26\}};
\draw (2.6,-2) node[l](b5){\{8\}};
\draw (-1.9,-0.9) node[v](a1){};
\draw (1.3,-0.9) node[v](a2){};
\draw (r)--(a1);
\draw (r)--(a2);
\draw (a1)--(b1);
\draw (a1)--(b2);
\draw (a2)--(b3);
\draw (a2)--(b4);
\draw (a2)--(b5);
\draw (-3.8,-2.7) node{$ \alpha $};
\draw (-1.9,-2.7) node{$ 1 $};
\draw (-0.75,-2.7) node{$ 2 $};
\draw (0.76,-2.7) node{$ 1 $};
\draw (2.1,-2.7) node{$ 1 $};
\draw (-3.8,-3.4) node{$ h $};
\draw (-2.6,-3.4) node{$ 0 $};
\draw (-1.35,-3.4) node{$ 1 $};
\draw (0,-3.4) node{$ 3 $};
\draw (1.4,-3.4) node{$ 4 $};
\draw (2.6,-3.4) node{$ 5 $};
\end{tikzpicture}
\caption{An example of the correspondence for the extended Catalan arrangement}
\label{fig:example catalan}
\end{figure}
We construct the corresponding flat of the extended Catalan arrangement $ \mathcal{C}^{2}_{9} $. 
First, for each $ i $, let $ \alpha_{i} $ denote the height of the minimal tree containing leaves $ B_{i} $ and $ B_{i+1} $. 
In this case we have the integer composition $ \alpha = (\alpha_{1}, \alpha_{2}, \alpha_{3}, \alpha_{4}) = (1,2,1,1) $. 
By taking the partial sum $\sum_{j=1}^{i-1}\alpha_{j}$ for each $i$, we obtain the sequence of heights $ (0,1,3,4,5) $. 
The height function $ h:[9]\rightarrow \mathbb{Z} $ is obtained by the following table. 
\begin{align*}
\begin{array}{c|ccccccccc}
v & 5 & 7 & 3 & 1 & 4 & 9 & 2 & 6  & 8 \\
\hline
h(v) & 0 & 0 & 1 & 3 & 3 & 3 & 4 & 4 & 5
\end{array}
\end{align*}
The corresponding flat in $ \mathcal{C}^{2}_{9} $ is 
\begin{align*}
\{x_{5}=x_{7}=x_{3}+1=x_{1}+3=x_{4}+3=x_{9}+3=x_{2}+4=x_{6}+4=x_{8}+5\}. 
\end{align*}
\end{example}

\subsection{Proof of Theorem \ref{main shi}}
We assume that all species in this subsection are $ \FinLinSetL $-species and $ V $ denotes a finite linearly ordered set. 
The symbols $ \min (B) $ and $ \max (B) $ stand for the minimum and the maximum elements of a subset $ B \subseteq V $. 

\begin{lemma}\label{connectedness Shi}
Let $ V $ be a finite linearly ordered set and $ h $ a height function on $ V $. 
Let $ \level(h) = (B_{1}, \dots, B_{k}) $ and $ \gap(h) = (\alpha_{1}, \dots, \alpha_{k-1}) $. 
For every positive integer $ m $, $ K_{V}^{[1-m,m]}[V,h] $ is connected if and only if $ \max(\gap(h)) \leq m $ holds and $ \alpha_{i} = m $ implies $ \min(B_{i}) < \max(B_{i+1}) $. 
\end{lemma}
\begin{proof}
Suppose that $ K_{V}^{[1-m,m]}[V,h] $ is connected. 
Then $ K_{V}^{[-m,m]}[V,h] $ is also connected since $ K_{V}^{[1-m,m]}[V,h] $ is a subgraph of $ K_{V}^{[-m,m]}[V,h] $. 
Therefore $ \max(\gap(h)) \leq m $ by Lemma \ref{connectedness Catalan}. 
Suppose that $ \alpha_{i} = m $. 
Then there exist vertices $ u \in B_{i} $ and $ v \in B_{i+1} $ such that $ \{u,v\}_{m} $ is an edge of $ K_{V}^{[1-m,m]}[V,h] $ since $ K_{V}^{[1-m,m]}[V,h] $ is connected, which implies $ u < v $ in $ V $. 
Thus it follows that $ \min(B_{i}) < \max(B_{i+1}) $. 
The proof of the converse is similar to the proof of Lemma \ref{connectedness Catalan}. 
\end{proof}

\begin{lemma}\label{decomposition shi}
Let $ m $ be a positive integer. 
Then $ \SpLK{1}{[1-m,m]} = \SpList_{+}^{\circ m} $. 
\end{lemma}
\begin{proof}
We proceed by induction on $ m $. 
Suppose that $ m=1 $. 
We will construct a natural transformation $ \eta \colon \SpLK{1}{[0,1]} \to \SpList_{+} $. 
Let $ V $ be a finite linearly ordered set and $ h $ a height function on $ V $ such that $ K_{V}^{[0,1]}[V,h] $ is connected.
Let $ \level(h) = (B_{1}, \dots, B_{k}) $. 
Make the list $ \beta_{i} = (v_{i1}, \dots, v_{ij_{i}}) $ such that $ B_{i} = \{v_{i1}, \dots, v_{ij_{i}}\} $ with $ v_{i1} > \dots > v_{ij_{i}} $. 
Define $ \eta_{V} $ as the list obtaining by concatenating $ \beta_{1}, \dots, \beta_{k} $. 

Next we will construct another natural transformation $ \xi \colon \SpList_{+} \to \SpLK{1}{[0,1]} $. 
Every list in $ \SpList_{+}[V] $ can be expressed as $ (v_{11}, \dots, v_{1j_{1}}, v_{21}, \dots, v_{2j_{2}}, \dots, v_{k1}, \dots, v_{kj_{k}}) $, where $ v_{i1} > \dots > v_{ij_{i}} $ and $  v_{ij_{i}} < v_{i+1 \, 1} $ for each $ i \in \{1, \dots k-1\} $. 
Let $ B_{i} \coloneqq \{ v_{i1}, \dots, v_{ij_{i}} \} $ and define the height function on $ V $ by $ h(v) \coloneqq i-1 $ for $ v \in B_{i} $. 
Then $ \max(\gap(h)) \leq 1 $ and $ \min(B_{i}) = v_{ij_{i}} < v_{i+1 \, 1} = \max(B_{i+1}) $. 
Therefore by Lemma \ref{connectedness Shi} $ (V,h) \in \SpLK{1}{[0,1]}[V] $. 
Thus we may define $ \xi_{V} $ by the correspondence above. 
One can show that $ \eta $ and $ \xi $ are inverse to each other and hence $ \SpLK{1}{[0,1]} = \SpList_{+} $. 

Assume that $ m \geq 2 $ and we will prove that $ \SpLK{1}{[1-m,m]} = \SpList_{+} \circ \SpLK{1}{[2-m, m-1]} $. 
We will construct a natural transformation $ \eta \colon \SpLK{1}{[1-m, m]} \to \SpList_{+} \circ \SpLK{1}{[2-m,m-1]} $.
Let $ h $ be a height function on a finite linearly ordered set $ V $ such that $ K_{V}^{[1-m,m]}[V,h] $ is connected. 
Let $ \level(h) = (B_{1}, \dots, B_{k}) $ and $ \gap(h) = (\alpha_{1}, \dots, \alpha_{k-1}) $. 
Suppose that $ \Set{j | \alpha_{j} = m} \cup \Set{j | \alpha_{j} = m-1, \ \min(B_{j}) > \max(B_{j+1}) } = \{j_{1}, \dots, j_{s-1}\} $ with $ j_{1} < \dots < j_{s-1} $. 
For each $ i \in \{1, \dots, s\} $, put $ C_{i} \coloneqq B_{j_{i-1}+1} \cup \dots \cup B_{j_{i}} $, where $ j_{0} = 0 $ and $ j_{s} \coloneqq k $. 
Define the height function $ h_{i} $ on $ C_{i} $ by $ h_{i}(v) \coloneqq h(v) - \min(h(C_{i})) $ for $ v \in C_{i} $. 
Then $ \max(\gap(h_{i})) \leq m-1 $ and $ \level(h_{i}) = (B_{j_{i-1}+1}, \dots, B_{j_{i}}) $. 
Moreover for $ i \in \{j_{i-1}+1, \dots, j_{i}\} $, if $ \alpha_{i} = m-1 $, then $ \min(B_{i}) < \max(B_{i+1}) $. 
Therefore $ (C_{i}, h_{i}) \in \SpLK{1}{[2-m,m-1]}[C_{i}] $ by Lemma \ref{connectedness Shi}. 
Thus we may define $ \eta_{V} $ by $ \eta_{V}((V,h)) \coloneqq ((C_{1}, h_{1}), \dots, (C_{s}, h_{s})) $. 

To construct another natural transformation $ \xi \colon \SpList_{+} \circ \SpLK{1}{[2-m,m-1]} \to \SpLK{1}{[1-m,m]} $, take an element $ ((C_{1}, h_{1}), \dots, (C_{s}, h_{s})) \in \SpList_{+} \circ \SpLK{1}{[2-m,m-1]}[V] $. 
For each $ i \in \{1, \dots, s-1\} $, define the integer $ \mu_{i} $ as follows. 
If the minimum of the terminal block of $ \level(h_{i}) $ is less than the maximum of the initial block of $ \level(h_{i+1}) $, then $ \mu_{i} \coloneqq m $. 
Otherwise let $ \mu_{i} \coloneqq m-1 $. 
Define the height function $ h $ on $ V $ by $ h(v) \coloneqq h_{i}(v) + \sum_{j=1}^{i-1}\left(\mu_{j}+\max(h_{j}(C_{j}))\right) $ for $ v \in C_{i} $. 
Then one can deduce that $ (V,h) \in \SpLK{1}{[1-m,m]}[V] $ by Lemma \ref{connectedness Shi}. 
Hence we may define $ \xi_{V} $ by $ \xi_{V}(((C_{1}, h_{1}), \dots, (C_{s}, h_{s}))) \coloneqq (V,h) $. 

It is easy to show that $ \eta $ and $ \xi $ are inverse to each other and hence $ \SpLK{1}{[1-m,m]} = \SpList_{+} \circ \SpLK{1}{[2-m,m-1]} $. 
Finally by the induction hypothesis, we have $ \SpLK{1}{[1-m,m]} = \SpList_{+}^{\circ m} $. 
\end{proof}

\begin{proof}[Proof of Theorem \ref{main shi}]
Use Lemma \ref{part decomp} and Lemma \ref{decomposition shi}. 
\end{proof}

\begin{example}
Consider $ (((v_{1}v_{2})(v_{3}))((v_{4})(v_{5}v_{6})(v_{7}))((v_{8}v_{9}))) = (((49)(5))((3)(71)(6))((82))) \in \SpList_{+}^{\circ 3}[9] $ (See Figure \ref{fig:example shi}). 
\begin{figure}[t]
\centering
\begin{tikzpicture}[scale=.75]
\draw (0,0) node[v](r){};
\draw (-3.6,-3) node[l](c1){4}; 
\draw (-2.8,-3) node[l](c2){9}; 
\draw (-2   ,-3) node[l](c3){5}; 
\draw (-1.2,-3) node[l](c4){3}; 
\draw (-0.4,-3) node[l](c5){7}; 
\draw ( 0.4,-3) node[l](c6){1}; 
\draw ( 1.2,-3) node[l](c7){6}; 
\draw ( 2   ,-3) node[l](c8){8}; 
\draw ( 2.8,-3) node[l](c9){2}; 
\draw (-3.2,-2) node[v](b1){};
\draw (-2   ,-2) node[v](b2){};
\draw (-1.2,-2) node[v](b3){};
\draw ( 0    ,-2) node[v](b4){};
\draw ( 1.2,-2) node[v](b5){};
\draw ( 2.4,-2) node[v](b6){};
\draw (-2.6,-1) node[v](a1){};
\draw ( 0    ,-1) node[v](a2){};
\draw ( 2.4,-1) node[v](a3){};
\draw (r)--(a1);
\draw (r)--(a2);
\draw (r)--(a3);
\draw (a1)--(b1);
\draw (a1)--(b2);
\draw (a2)--(b3);
\draw (a2)--(b4);
\draw (a2)--(b5);
\draw (a3)--(b6);
\draw (b1)--(c1);
\draw (b1)--(c2);
\draw (b2)--(c3);
\draw (b3)--(c4);
\draw (b4)--(c5);
\draw (b4)--(c6);
\draw (b5)--(c7);
\draw (b6)--(c8);
\draw (b6)--(c9);
\draw (-5,-3.8) node{$ \alpha $}; 
\draw (-3.2,-3.8) node{$ 1 $}; 
\draw (-2.4,-3.8) node{$ 2 $}; 
\draw (-1.6,-3.8) node{$ 3 $}; 
\draw (-0.8,-3.8) node{$ 2 $}; 
\draw (0,-3.8) node{$ 1 $}; 
\draw (0.8,-3.8) node{$ 2 $}; 
\draw (1.6,-3.8) node{$ 3 $}; 
\draw (2.4,-3.8) node{$ 1 $}; 
\draw (-5,-4.6) node{$ \alpha^{\prime} $}; 
\draw (-3.2,-4.6) node{$ 1 $}; 
\draw (-2.4,-4.6) node{$ 1 $}; 
\draw (-1.6,-4.6) node{$ 2 $}; 
\draw (-0.8,-4.6) node{$ 2 $}; 
\draw (0,-4.6) node{$ 0 $}; 
\draw (0.8,-4.6) node{$ 2 $}; 
\draw (1.6,-4.6) node{$ 3 $}; 
\draw (2.4,-4.6) node{$ 0 $}; 
\draw (-5,-5.4) node{$ h $}; 
\draw (-3.6,-5.4) node{$ 0 $}; 
\draw (-2.8,-5.4) node{$ 1 $}; 
\draw (-2,-5.4) node{$ 2 $}; 
\draw (-1.2,-5.4) node{$ 4 $}; 
\draw (-0.4,-5.4) node{$ 6 $}; 
\draw (0.4,-5.4) node{$ 6 $}; 
\draw (1.2,-5.4) node{$ 8 $}; 
\draw (2,-5.4) node{$ 11 $}; 
\draw (2.8,-5.4) node{$ 11 $}; 
\end{tikzpicture}
\caption{An example of the correspondence for the extended Shi arrangement}
\label{fig:example shi}
\end{figure}
We construct the corresponding flat of the extended Shi arrangement $ \mathcal{S}^{3}_{9} $. 
First let $ \alpha $ be the integer composition obtained in a similar way in Example \ref{ex:catalan}. 
In this case $ \alpha = (1,2,3,2,1,2,3,1) $. 
Next define the integer composition $ \alpha^{\prime} $ by 
\begin{align*}
\alpha^{\prime}_{i} \coloneqq \begin{cases}
\alpha_{i} & \text{ if } v_{i} < v_{i+1}, \\
\alpha_{i}-1 & \text{ if } v_{i} > v_{i+1}. 
\end{cases}
\end{align*}
In this case $ \alpha^{\prime} = (1,1,2,2,0,2,3,0) $. 
By taking the partial sum $\sum_{j=1}^{i-1}\alpha_{j}$ for each $i$, we obtain the sequence of heights $ (0,1,2,4,6,6,8,11,11) $. 
The height function $ h $ is obtained by the following table.
\begin{align*}
\begin{array}{c|ccccccccc}
v & 4 & 9 & 5 & 3 & 7 & 1 & 6 & 8  & 2 \\
\hline
h(v) & 0 & 1 & 2 & 4 & 6 & 6 & 8 & 11 & 11
\end{array}
\end{align*}
The corresponding flat is 
\begin{align*}
\{x_{4}=x_{9}+1=x_{5}+2=x_{3}+4=x_{7}+6=x_{1}+6=x_{6}+8=x_{8}+11=x_{2}+11\}. 
\end{align*}
\end{example}

\subsection{Proof of Theorem \ref{main matrices}}
Let $ \SpF $ be a species with $ \SpF[\varnothing] = \varnothing $. 
We consider the infinite matrix $ \Big[ \big| (\SpSet_{i} \circ \SpF)[j] \big| \Big] $. 
Note that almost all entries of each column of the matrix are $ 0 $ since 
\begin{align*}
\sum_{i=1}^{\infty} \big|(\SpSet_{i} \circ \SpF)[j]\big|=\big|(\SpSet \circ \SpF)[j]\big| < \infty. 
\end{align*}
We show that substitution of species is compatible with product of the infinite matrices. 
\begin{proposition}\label{matrix product}
Let $ \SpF $ and $ \SpG $ be species with $ \SpF[\varnothing] = \SpG[\varnothing] = \varnothing $. 
Then 
\begin{align*}
\Big[ \big| (\SpSet_{i} \circ \SpF)[j] \big| \Big] \Big[ \big| (\SpSet_{i} \circ \SpG)[j] \big| \Big] = \Big[ \big| (\SpSet_{i} \circ \SpF \circ \SpG)[j] \big| \Big]. 
\end{align*}
\end{proposition}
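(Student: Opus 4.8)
The plan is to interpret both sides of the claimed identity as the cardinality of one and the same set of composite structures. Fix a finite set $ V $ with $ |V| = j $ and abbreviate $ a_{ik} \coloneqq \big|(\SpSet_i \circ \SpF)[k]\big| $ and $ b_{kj} \coloneqq \big|(\SpSet_k \circ \SpG)[j]\big| $, so that the $ (i,j) $-entry of the left-hand product is the sum $ \sum_{k \geq 1} a_{ik} b_{kj} $. First I would observe that this sum is finite: by the remark preceding the proposition each column of $ \big[ b_{kj} \big] $ is finitely supported, and in any case $ a_{ik} = 0 $ unless $ i \leq k $ while $ b_{kj} = 0 $ unless $ k \leq j $, so at most $ j $ terms are nonzero and the matrix product is well-defined.

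Next I would expand the two factors directly from the definition of substitution. Since $ \SpSet_k[\rho] $ is a singleton when $ |\rho| = k $ and empty otherwise,
\begin{align*}
b_{kj} = \big|(\SpSet_k \circ \SpG)[V]\big| = \sum_{\substack{\rho \in \SpPart[V] \\ |\rho| = k}} \ \prod_{C \in \rho} \big|\SpG[C]\big|.
\end{align*}
Moreover $ \SpSet_i \circ \SpF $ is itself a species, so its value on a finite set depends, up to bijection, only on the cardinality of that set; hence $ a_{ik} = \big|(\SpSet_i \circ \SpF)[\rho]\big| $ for every $ k $-element set $ \rho $. Substituting these two facts into $ \sum_{k} a_{ik} b_{kj} $ and merging the sum over $ k $ with the inner sum over partitions of size $ k $ into a single sum over all partitions of $ V $, I obtain
\begin{align*}
\sum_{k \geq 1} a_{ik} b_{kj} = \sum_{\rho \in \SpPart[V]} \big|(\SpSet_i \circ \SpF)[\rho]\big| \, \prod_{C \in \rho} \big|\SpG[C]\big|.
\end{align*}

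Finally I would recognize the right-hand side as $ \big|(\SpSet_i \circ \SpF \circ \SpG)[V]\big| $. Using associativity of substitution, $ \SpSet_i \circ \SpF \circ \SpG = (\SpSet_i \circ \SpF) \circ \SpG $ --- all the compositions here are legitimate because $ \SpF[\varnothing] = \SpG[\varnothing] = \varnothing $ forces $ (\SpF \circ \SpG)[\varnothing] = \varnothing $ and $ (\SpSet_i \circ \SpF)[\varnothing] = \varnothing $ --- and the defining formula for substitution gives
\begin{align*}
\big((\SpSet_i \circ \SpF) \circ \SpG\big)[V] = \bigsqcup_{\rho \in \SpPart[V]} \Big( (\SpSet_i \circ \SpF)[\rho] \times \prod_{C \in \rho} \SpG[C] \Big),
\end{align*}
whose cardinality is exactly the sum just displayed. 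Comparing the three displays proves the proposition. The one step that requires care is the associativity of substitution: if one prefers not to cite it, it can be checked inline by expanding each $ (\SpF \circ \SpG)[B] $ inside $ \SpSet_i \circ (\SpF \circ \SpG) $, distributing the block-indexed products over the disjoint unions, and noting that choosing a partition of each block of the outer partition $ \pi $ is the same datum as choosing a partition $ \rho $ of $ V $ that refines $ \pi $. This reindexing is bookkeeping rather than a genuine difficulty, and is really the only obstacle in the argument.
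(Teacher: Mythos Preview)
Your proof is correct, but it follows a different route from the paper's. The paper argues entirely with exponential generating functions: it fixes $i$, writes $(\SpSet_i \circ \SpF \circ \SpG)(x) = (\SpSet_i \circ \SpF)(\SpG(x))$ using the standard fact that species substitution corresponds to substitution of exponential generating functions, expands $(\SpSet_i \circ \SpF)(\SpG(x)) = \sum_k |(\SpSet_i \circ \SpF)[k]|\,\SpG(x)^k/k! = \sum_k |(\SpSet_i \circ \SpF)[k]|\,(\SpSet_k \circ \SpG)(x)$, and then compares coefficients of $x^j/j!$. You instead work directly at the level of structures, expanding $(\SpSet_k \circ \SpG)[V]$ via the defining disjoint union over partitions and then recognizing the result as $\big((\SpSet_i \circ \SpF) \circ \SpG\big)[V]$, invoking associativity of substitution to finish. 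In effect you are reproving, by hand, the special case of the substitution-of-generating-functions identity that the paper simply cites. Your argument is more self-contained and makes the underlying bijection transparent; the paper's is shorter because it cashes in machinery already set up in the introduction.
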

\begin{proof}
Fix a positive integer $ i $. 
By definition,
\begin{align*}
(\SpSet_{i} \circ \SpF \circ \SpG)(x) = \sum_{j=0}^{\infty} \big|(\SpSet_{i} \circ \SpF \circ \SpG)[j]\big|\, \frac{x^{j}}{j!}. 
\end{align*}
We give another calculation of the series as follows. 
\begin{align*}
(\SpSet_{i} \circ \SpF \circ \SpG)(x) 
&= (\SpSet_{i} \circ \SpF)(\SpG(x)) \\
&= \sum_{k=0}^{\infty}\big| (\SpSet_{i} \circ \SpF)[k] \big| \, \frac{\SpG(x)^{k}}{k!} \\
&= \sum_{k=0}^{\infty}\big| (\SpSet_{i} \circ \SpF)[k] \big| \, (\SpSet_{k} \circ \SpG)(x) \\
&= \sum_{k=0}^{\infty}\big| (\SpSet_{i} \circ \SpF)[k] \big| \, \sum_{j=0}^{\infty}\big| (\SpSet_{k} \circ \SpG)[j] \big| \, \frac{x^{j}}{j!} \\
&= \sum_{j=0}^{\infty} \left( \sum_{k=0}^{\infty} \big| (\SpSet_{i} \circ \SpF)[k] \big| \, \big| \SpSet_{k} \circ \SpG[j] \big| \right) \, \frac{x^{j}}{j!}. 
\end{align*}
Therefore we have 
\begin{align*}
\big|\SpSet_{i} \circ \SpF \circ \SpG[j]\big| = \sum_{k=0}^{\infty} \big| \SpSet_{i} \circ \SpF[k] \big| \, \big| \SpSet_{k} \circ \SpG[j] \big| 
\end{align*}
for any positive integers $ i $ and $ j $. 
Hence the assertion holds. 
\end{proof}

\begin{example}\label{Lah matrix}
Let $ \SpCycle $ denote the \textbf{species of cyclic permutations}. 
Then the substitution $ \SpSet \circ \SpCycle_{+} $ coincides with the \textbf{species of permutations}. 
As mentioned in \cite[p. 346]{bergeron1998combinatorial}, we have that $ \SpList = \SpSet \circ \SpCycle_{+} $ as $ \FinLinSetL $-species. 
Indeed let each permutation $ \sigma \in (\SpSet \circ \SpCycle_{+})[n] $ correspond to the list $ (\sigma(1), \sigma(2), \cdots, \sigma(n)) \in \SpList[n] $. 
It is easy to see that this correspondence is bijective. 
Note that $ \SpList $ and $ \SpSet \circ \SpCycle_{+} $ are not isomorphic as $ \FinSetB $-species. 
Recall that $ c, S $, and $\Big[ \big| (\SpSet_{i} \circ \SpList_{+})[j] \big| \Big] $ is the infinite upper triangular matrix consisting of Stirling numbers of the first and second kind, and Lah numbers. 
We can recover the following well-known equality. 
\begin{align*}
\Big[ \big| (\SpSet_{i} \circ \SpList_{+})[j] \big| \Big] &= \Big[ \big| (\SpSet_{i} \circ \SpSet_{+} \circ \SpCycle_{+})[j] \big| \Big] = \Big[ \big| (\SpSet_{i} \circ \SpSet_{+})[j] \big| \Big] \Big[ \big| (\SpSet_{i} \circ \SpCycle_{+})[j] \big| \Big] = Sc. 
\end{align*}
\end{example}

\begin{proof}[Proof of Theorem \ref{main matrices}]
From Theorem \ref{main catalan}, Proposition \ref{matrix product}, and Example \ref{Lah matrix} 
\begin{align*}
\Big[ \, \big|L_{i}(\mathcal{C}^{m}_{j})\big| \, \Big] 
&= \Big[ \, \big| \SpLCat{i}{m}[j] \big| \, \Big]
= \Big[ \, \big| (\SpSet_{i} \circ \SpList_{+}^{\circ m} \circ \SpSet_{+})[j] \big| \, \Big] \\
&= \Big[ \big| (\SpSet_{i} \circ \SpList_{+})[j] \big| \Big]^{m} \Big[ \big| (\SpSet_{i} \circ \SpSet_{+})[j] \big| \Big]
= (Sc)^{m}S. 
\end{align*}
Similarly from Theorem \ref{main shi}, Proposition \ref{matrix product}, and Example \ref{Lah matrix} 
\begin{align*}
\Big[ \, \big|L_{i}(\mathcal{S}^{m}_{j})\big| \, \Big] 
&= \Big[ \, \big| \SpLShi{i}{m}[j] \big| \, \Big]
= \Big[ \, \big| (\SpSet_{i} \circ \SpList_{+}^{\circ m})[j] \big| \, \Big] \\
&= \Big[ \big| (\SpSet_{i} \circ \SpList_{+})[j] \big| \Big]^{m} 
= (Sc)^{m}. 
\end{align*}
\end{proof}

\subsection{Proof of Theorem \ref{main shilah}}
Let $ a_{ij} $ denote the Lah number $ \big| (\SpSet_{i} \circ \SpList_{+})[j] \big| $. 
By Proposition \ref{lah number}, $ a_{ij} = \frac{j!(j-1)!}{i!(i-1)!(j-i)!} $. 
Note that if $ i > j $, then $ a_{ij}=0 $. 
\begin{lemma}\label{lah power}
For every positive integer $ m $, $ [a_{ij}]^{m} = [m^{j-i}a_{ij}] $. 
\end{lemma}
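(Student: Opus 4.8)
The plan is to argue by induction on $m$, reducing the infinite‑matrix identity to a single instance of the binomial theorem. The base case $m=1$ is trivial, since $[a_{ij}]^{1}=[a_{ij}]=[1^{j-i}a_{ij}]$; recall that $a_{ij}=0$ whenever $i>j$, so both sides are upper triangular and it suffices to compare entries with $i\le j$. This upper‑triangularity is also what makes the infinite products well defined (each entry being a finite sum), exactly as in the discussion preceding Proposition \ref{matrix product}.

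For the inductive step I would assume $[a_{ij}]^{m}=[m^{j-i}a_{ij}]$, write $[a_{ij}]^{m+1}=[a_{ij}]^{m}\,[a_{ij}]$, and read off that its $(i,j)$ entry is $\sum_{k=i}^{j} m^{k-i}a_{ik}a_{kj}$ (all other terms vanish). The key elementary ingredient, which I would record as a one‑line factorial cancellation valid for $i\le k\le j$, is
\begin{align*}
\frac{a_{ik}\,a_{kj}}{a_{ij}}
=\frac{k!(k-1)!}{i!(i-1)!(k-i)!}\cdot\frac{j!(j-1)!}{k!(k-1)!(j-k)!}\cdot\frac{i!(i-1)!(j-i)!}{j!(j-1)!}
=\binom{j-i}{k-i}.
\end{align*}
Substituting this and setting $l=k-i$ then gives
\begin{align*}
\sum_{k=i}^{j} m^{k-i}a_{ik}a_{kj}
= a_{ij}\sum_{l=0}^{j-i}\binom{j-i}{l}m^{l}
= a_{ij}(1+m)^{j-i}
= (m+1)^{j-i}a_{ij},
\end{align*}
which is precisely the $(i,j)$ entry of $[(m+1)^{j-i}a_{ij}]$, completing the induction.

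I do not expect a genuine obstacle here: the only points requiring care are the well‑definedness of the matrix product (handled by upper‑triangularity) and the index bookkeeping in the factorial cancellation, both of which are routine. As an alternative one could avoid the induction by conjugating the Lah matrix $[a_{ij}]=[\tfrac{j!}{i!}\binom{j-1}{i-1}]$ to the Pascal matrix $[\binom{j-1}{i-1}]$ by the diagonal matrix $\mathrm{diag}(1!,2!,3!,\dots)$ and invoking $[\binom{j-1}{i-1}]^{m}=[m^{j-i}\binom{j-1}{i-1}]$; but since that identity is itself proved in the same way, the inductive route above is the most self‑contained, and it feeds directly into the proof of Theorem \ref{main shilah} via Theorem \ref{main matrices}.
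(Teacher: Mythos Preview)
Your proof is correct and follows essentially the same approach as the paper: induction on $m$, reduction to the factorial identity $a_{ik}a_{kj}=\binom{j-i}{k-i}a_{ij}$, and an application of the binomial theorem. The only cosmetic difference is that the paper writes $[a_{ij}]^{m}=[a_{ij}]\,[a_{ij}]^{m-1}$ (so the sum involves $(m-1)^{j-k}$) whereas you write $[a_{ij}]^{m+1}=[a_{ij}]^{m}\,[a_{ij}]$ (so the sum involves $m^{k-i}$); the resulting computations are identical up to a change of index.
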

\begin{proof}
We proceed by induction on $ m $. 
If $ m=1 $, then it is trivial. 
Assume that $ m \geq 2 $. 
By the induction hypothesis, the $ (i,j) $-entry of the matrix $ [a_{ij}]^{m} = [a_{ij}][a_{ij}]^{m-1} $ is 
\begin{align*}
\sum_{k=i}^{j}a_{ik} (m-1)^{j-k}a_{kj} &= \sum_{k=i}^{j} \dfrac{k!(k-1)!}{i!(i-1)!(k-i)!}(m-1)^{j-k}\dfrac{j!(j-1)!}{k!(k-1)!(j-k)!} \\
&= \dfrac{j!(j-1)!}{i!(i-1)!(j-i)!}\sum_{k=i}^{j}\dfrac{(j-i)!}{(j-k)!(k-i)!}(m-1)^{j-k}\\
&= a_{ij}\sum_{k=i}^{j}\binom{j-i}{k-i}(m-1)^{j-k} 
= a_{ij}\sum_{k=0}^{j-i}\binom{j-i}{k}(m-1)^{j-i-k}  = m^{j-i}a_{ij}.
\end{align*}
This completes the proof. 
\end{proof}
\begin{proof}[Proof of Theorem \ref{main shilah}]
The assertion holds immediately from Proposition \ref{lah number}, Theorem \ref{main matrices} and Lemma \ref{lah power}. 
\end{proof}

\section*{Acknowledgments}
This work was supported by JSPS KAKENHI Grant Number JP20K14282.

\newpage
\appendix
\section{Numerical tables}\label{sec:table}

\begin{table}[H]
\centering
\begin{tabular}{|C{10mm}|R{2mm}R{6mm}R{8mm}R{10mm}R{12mm}R{16mm}R{18mm}|C{15mm}|}
\hline
$ n $ & 1\ic & 2\ic & 3\ic & 4\ic & 5\ic & 6\ic & 7 & OEIS \\
\hhline{|=|=======|=|}
$ L(\mathcal{B}_{n}) $ & 1\ic & 2\ic & 5\ic & 15\ic & 52\ic & 203\ic & 877 & A000110 \\
\hline
$ L(\mathcal{C}^{1}_{n}) $ & 1\ic & 4\ic & 23\ic & 173\ic & 1602\ic & 17575\ic & 222497 & A075729 \\
\hline
$ L(\mathcal{C}^{2}_{n}) $ & 1\ic & 6\ic & 53\ic & 619\ic & 8972\ic & 155067\ic & 3109269 & A109092  \\
\hline
$ L(\mathcal{C}^{3}_{n}) $ & 1\ic &  8\ic &  95\ic &  1497\ic &  29362\ic &  688439\ic &  18766393 & None  \\
\hline
$ L(\mathcal{C}^{4}_{n}) $ & 1\ic & 10\ic & 149\ic & 2951\ic & 72852\ic & 2152651\ic & 74031869 & None \\
\hline
\end{tabular}
\caption{The numbers of flats of the extended Catalan arrangements}
\end{table}

\begin{table}[H]
\centering
\begin{tabular}{|C{10mm}|R{2mm}R{6mm}R{8mm}R{10mm}R{12mm}R{16mm}R{18mm}|C{15mm}|}
\hline
$ n $ & 1\ic & 2\ic & 3\ic & 4\ic & 5\ic & 6\ic & 7 & OEIS \\
\hhline{|=|=======|=|}
$ L(\mathcal{S}^{1}_{n}) $ & 1\ic & 3\ic & 13\ic & 73\ic & 501\ic & 4051\ic & 37633 & A000262 \\
\hline
$ L(\mathcal{S}^{2}_{n}) $ & 1\ic & 5\ic & 37\ic & 361\ic & 4361\ic & 62701\ic & 1044205 & A025168  \\
\hline
$ L(\mathcal{S}^{3}_{n}) $ & 1\ic & 7\ic & 73\ic & 1009\ic & 17341\ic & 355951\ic & 8488117 & A321837  \\
\hline
$ L(\mathcal{S}^{4}_{n}) $ & 1\ic & 9\ic & 121\ic & 2161\ic & 48081\ic & 1279801\ic & 39631369 & A321847  \\
\hline
$ L(\mathcal{S}^{5}_{n}) $ & 1\ic & 11\ic & 181\ic & 3961\ic & 108101\ic & 3532651\ic & 134415961 & A321848 \\
\hline
\end{tabular}
\caption{The numbers of flats of the extended Shi arrangements}
\end{table}

\begin{table}[H]
\centering
\begin{tabular}{|C{10mm}|R{2mm}R{6mm}R{8mm}R{10mm}R{12mm}R{16mm}R{18mm}|C{15mm}|}
\hline
$ n $ & 1\ic & 2\ic & 3\ic & 4\ic & 5\ic & 6\ic & 7 & OEIS \\
\hhline{|=|=======|=|}
$ L_{1}(\mathcal{B}_{n}) $ & 1\ic & 1\ic & 1\ic & 1\ic & 1\ic & 1\ic & 1 & A000012 \\
\hline
$ L_{1}(\mathcal{C}^{1}_{n}) $ & 1\ic & 3\ic & 13\ic & 75\ic & 541\ic & 4683\ic & 47293 & A000670 \\
\hline
$ L_{1}(\mathcal{C}^{2}_{n}) $ & 1\ic & 5\ic & 37\ic & 365\ic & 4501\ic & 66605\ic & 1149877 & A050351  \\
\hline
$ L_{1}(\mathcal{C}^{3}_{n}) $ & 1\ic & 7\ic & 73\ic & 1015\ic & 17641\ic & 367927\ic & 8952553 & A050352  \\
\hline
$ L_{1}(\mathcal{C}^{4}_{n}) $ & 1\ic & 9\ic & 121\ic & 2169\ic & 48601\ic & 1306809\ic & 40994521 & A050353 \\
\hline
\end{tabular}
\caption{The numbers of $ 1 $-dimensional flats of the extended Catalan arrangements}
\end{table}

\begin{table}[H]
\centering
\begin{tabular}{|C{11mm}|R{2mm}R{6mm}R{8mm}R{10mm}R{12mm}R{16mm}R{18mm}|C{15mm}|}
\hline
$ n $ & 1\ic & 2\ic & 3\ic & 4\ic & 5\ic & 6\ic & 7 & OEIS \\
\hhline{|=|=======|=|}
$ L_{1}(\mathcal{S}^{1}_{n}) $ & 1\ic & 2\ic & 6\ic & 24\ic & 120\ic & 720\ic & 5040 & A000142 \\
\hline
$ L_{1}(\mathcal{S}^{2}_{n}) $ & 1\ic & 4\ic & 24\ic & 192\ic & 1920\ic & 23040\ic & 322560 & A002866  \\
\hline
$ L_{1}(\mathcal{S}^{3}_{n}) $ & 1\ic & 6\ic & 54\ic & 648\ic & 9720\ic & 174960\ic & 3674160 & A034001  \\
\hline
$ L_{1}(\mathcal{S}^{4}_{n}) $ & 1\ic & 8\ic & 96\ic & 1536\ic & 30720\ic & 737280\ic & 20643840 & A034177  \\
\hline
$ L_{1}(\mathcal{S}^{5}_{n}) $ & 1\ic & 10\ic & 150\ic & 3000\ic & 75000\ic & 2250000\ic & 78750000 & A034325 \\
\hline
\end{tabular}
\caption{The numbers of $ 1 $-dimensional flats of the extended Shi arrangements}
\end{table}

\begin{table}[H]
\centering
\begin{tabular}{|C{11mm}||R{11mm}R{11mm}R{9mm}R{8mm}R{4mm}|}
\hline
\multicolumn{1}{|l}{$ L_{k}(\mathcal{B}_{n}) $ } & \multicolumn{5}{r|}{A008277} \\
\hline
$ n\backslash k $ & 1\ic & 2\ic & 3\ic & 4\ic & 5 \\
\hhline{|=#=====|}
1 & 1\ic &  &  &  &  \\
2 & 1\ic  & 1\ic &  &  &  \\
3 & 1\ic & 3\ic & 1\ic &  &  \\
4 & 1\ic & 7\ic & 6\ic & 1\ic &  \\
5 & 1\ic & 15\ic & 25\ic & 10\ic & 1 \\
\hline
\end{tabular}
\begin{tabular}{|C{11mm}||R{11mm}R{11mm}R{9mm}R{8mm}R{4mm}|}
\hline
\multicolumn{1}{|l}{$ L_{k}(\mathcal{S}^{1}_{n}) $ } & \multicolumn{5}{r|}{A105278} \\
\hline
$ n\backslash k $ & 1\ic & 2\ic & 3\ic & 4\ic & 5 \\
\hhline{|=#=====|}
1 & 1\ic &  &  &  &  \\
2 & 2\ic  & 1\ic &  &  &  \\
3 & 6\ic & 6\ic & 1\ic &  &  \\
4 & 24\ic & 36\ic & 12\ic & 1\ic &  \\
5 & 120\ic & 240\ic & 120\ic & 20\ic & 1 \\
\hline
\end{tabular} \\ \vspace{1mm}
\begin{tabular}{|C{11mm}||R{11mm}R{11mm}R{9mm}R{8mm}R{4mm}|}
\hline
\multicolumn{1}{|l}{$ L_{k}(\mathcal{C}^{1}_{n}) $ } & \multicolumn{5}{r|}{A088729} \\
\hline
$ n\backslash k $ & 1\ic & 2\ic & 3\ic & 4\ic & 5 \\
\hhline{|=#=====|}
1 & 1\ic &  &  &  &  \\
2 & 3\ic  & 1\ic &  &  &  \\
3 & 13\ic & 9\ic & 1\ic &  &  \\
4 & 75\ic & 79\ic & 18\ic & 1\ic &  \\
5 & 541\ic & 765\ic & 265\ic & 30\ic & 1 \\
\hline
\end{tabular}
\begin{tabular}{|C{11mm}||R{11mm}R{11mm}R{9mm}R{8mm}R{4mm}|}
\hline
\multicolumn{1}{|l}{$ L_{k}(\mathcal{S}^{2}_{n}) $ } & \multicolumn{5}{r|}{A079621} \\
\hline
$ n\backslash k $ & 1\ic & 2\ic & 3\ic & 4\ic & 5 \\
\hhline{|=#=====|}
1 & 1\ic &  &  &  &  \\
2 & 4\ic  & 1\ic &  &  &  \\
3 & 24\ic & 12\ic & 1\ic &  &  \\
4 & 192\ic & 144\ic & 24\ic & 1\ic &  \\
5 & 1920\ic & 1920\ic & 480\ic & 40\ic & 1 \\
\hline
\end{tabular}\\ \vspace{1mm}
\begin{tabular}{|C{11mm}||R{11mm}R{11mm}R{9mm}R{8mm}R{4mm}|}
\hline
\multicolumn{1}{|l}{$ L_{k}(\mathcal{C}^{2}_{n}) $ } & \multicolumn{5}{r|}{A308440} \\
\hline
$ n\backslash k $ & 1\ic & 2\ic & 3\ic & 4\ic & 5 \\
\hhline{|=#=====|}
1 & 1\ic &  &  &  &  \\
2 & 5\ic  & 1\ic &  &  &  \\
3 & 37\ic & 15\ic & 1\ic &  &  \\
4 & 365\ic & 223\ic & 30\ic & 1\ic &  \\
5 & 4501\ic & 3675\ic & 745\ic & 50\ic & 1 \\
\hline
\end{tabular}
\begin{tabular}{|C{11mm}||R{11mm}R{11mm}R{9mm}R{8mm}R{4mm}|}
\hline
\multicolumn{1}{|l}{$ L_{k}(\mathcal{S}^{3}_{n}) $ } & \multicolumn{5}{r|}{A308281} \\
\hline
$ n\backslash k $ & 1\ic & 2\ic & 3\ic & 4\ic & 5 \\
\hhline{|=#=====|}
1 & 1\ic &  &  &  &  \\
2 & 6\ic  & 1\ic &  &  &  \\
3 & 54\ic & 18\ic & 1\ic &  &  \\
4 & 648\ic & 324\ic & 36\ic & 1\ic &  \\
5 & 9720\ic & 6480\ic & 1080\ic & 60\ic & 1 \\
\hline
\end{tabular}\\ \vspace{1mm}
\begin{tabular}{|C{11mm}||R{11mm}R{11mm}R{9mm}R{8mm}R{4mm}|}
\hline
\multicolumn{1}{|l}{$ L_{k}(\mathcal{C}^{3}_{n}) $ } & \multicolumn{5}{r|}{NONE} \\
\hline
$ n\backslash k $ & 1\ic & 2\ic & 3\ic & 4\ic & 5 \\
\hhline{|=#=====|}
1 & 1\ic &  &  &  &  \\
2 & 7\ic  & 1\ic &  &  &  \\
3 & 73\ic & 21\ic & 1\ic &  &  \\
4 & 1015\ic & 439\ic & 42\ic & 1\ic &  \\
5 & 17641\ic & 10185\ic & 1465\ic & 70\ic & 1 \\
\hline
\end{tabular}
\begin{tabular}{|C{11mm}||R{11mm}R{11mm}R{9mm}R{8mm}R{4mm}|}
\hline
\multicolumn{1}{|l}{$ L_{k}(\mathcal{S}^{4}_{n}) $ } & \multicolumn{5}{r|}{A048786} \\
\hline
$ n\backslash k $ & 1\ic & 2\ic & 3\ic & 4\ic & 5 \\
\hhline{|=#=====|}
1 & 1\ic &  &  &  &  \\
2 & 8\ic  & 1\ic &  &  &  \\
3 & 96\ic & 24\ic & 1\ic &  &  \\
4 & 1536\ic & 576\ic & 48\ic & 1\ic &  \\
5 & 30720\ic & 15360\ic & 1920\ic & 80\ic & 1 \\
\hline
\end{tabular}\\ \vspace{1mm}
\begin{tabular}{|C{11mm}||R{11mm}R{11mm}R{9mm}R{8mm}R{4mm}|}
\hline
\multicolumn{1}{|l}{$ L_{k}(\mathcal{C}^{4}_{n}) $ } & \multicolumn{5}{r|}{NONE} \\
\hline
$ n\backslash k $ & 1\ic & 2\ic & 3\ic & 4\ic & 5 \\
\hhline{|=#=====|}
1 & 1\ic &  &  &  &  \\
2 & 9\ic  & 1\ic &  &  &  \\
3 & 121\ic & 27\ic & 1\ic &  &  \\
4 & 2169\ic & 727\ic & 54\ic & 1\ic &  \\
5 & 48601\ic & 21735\ic & 2425\ic & 90\ic & 1 \\
\hline
\end{tabular}
\begin{tabular}{|C{11mm}||R{11mm}R{11mm}R{9mm}R{8mm}R{4mm}|}
\hline
\multicolumn{1}{|l}{$ L_{k}(\mathcal{S}^{5}_{n}) $ } & \multicolumn{5}{r|}{A308282} \\
\hline
$ n\backslash k $ & 1\ic & 2\ic & 3\ic & 4\ic & 5 \\
\hhline{|=#=====|}
1 & 1\ic &  &  &  &  \\
2 & 10\ic  & 1\ic &  &  &  \\
3 & 150\ic & 30\ic & 1\ic &  &  \\
4 & 3000\ic & 900\ic & 60\ic & 1\ic &  \\
5 & 75000\ic & 30000\ic & 3000\ic & 100\ic & 1 \\
\hline
\end{tabular}
\caption{Triangles of the number of $ k $-dimensional flats}\label{tab:triangles}
\end{table}

\newpage
\bibliographystyle{amsplain}
\bibliography{bibfile}

\end{document}